\newtheorem{thm}{Theorem} [section]
\newtheorem{lem}[thm]{Lemma}
\newtheorem{prop}[thm]{Proposition}
\theoremstyle{definition}
\newtheorem{definition}[thm]{Definition}
\theoremstyle{remark}
\newtheorem{rem}[thm]{Remark}
\numberwithin{equation}{section}
\begin{document}

\newcommand{\thmref}[1]{Theorem~\ref{#1}}
\newcommand{\secref}[1]{Section~\ref{#1}}
\newcommand{\lemref}[1]{Lemma~\ref{#1}}
\newcommand{\propref}[1]{Proposition~\ref{#1}}
\newcommand{\corref}[1]{Corollary~\ref{#1}}
\newcommand{\remref}[1]{Remark~\ref{#1}}
\newcommand{\eqnref}[1]{(\ref{#1})}

\newcommand{\exref}[1]{Example~\ref{#1}}

\newtheorem{innercustomthm}{{\bf Theorem}}
\newenvironment{customthm}[1]
  {\renewcommand\theinnercustomthm{#1}\innercustomthm}
  {\endinnercustomthm}
  
  \newtheorem{innercustomcor}{{\bf Corollary}}
\newenvironment{customcor}[1]
  {\renewcommand\theinnercustomcor{#1}\innercustomcor}
  {\endinnercustomthm}
  
  \newtheorem{innercustomprop}{{\bf Proposition}}
\newenvironment{customprop}[1]
  {\renewcommand\theinnercustomprop{#1}\innercustomprop}
  {\endinnercustomthm}

\newcommand{\bbinom}[2]{\begin{bmatrix}#1 \\ #2\end{bmatrix}}
\newcommand{\cbinom}[2]{\set{\^!\^!\^!\begin{array}{c} #1 \\ #2\end{array}\^!\^!\^!}}
\newcommand{\abinom}[2]{\ang{\^!\^!\^!\begin{array}{c} #1 \\ #2\end{array}\^!\^!\^!}}
\newcommand{\qfact}[1]{[#1]^^!}

\newcommand{\nc}{\newcommand}
 \nc{\A}{\mathcal A} 
\nc{\Ainv}{\A^{\rm inv}}
\nc{\aA}{{}_\A}
\nc{\aAp}{{}_\A'}
\nc{\aff}{{}_\A\f}
\nc{\aL}{{}_\A L}
\nc{\aM}{{}_\A M}
\nc{\Bin}{B_i^{(n)}}
\nc{\dL}{{}^\omega L}
\nc{\Z}{{\mathbb Z}}
 \nc{\C}{{\mathbb C}}
 \nc{\N}{{\mathbb N}}
 \nc{\fZ}{{\mf Z}}
 \nc{\F}{{\mf F}}
 \nc{\Q}{\mathbb{Q}}
 \nc{\la}{\lambda}
 \nc{\ep}{\epsilon}
 \nc{\h}{\mathfrak h}
 \nc{\He}{\bold{H}}
 \nc{\htt}{\text{tr }}
 \nc{\n}{\mf n}
 \nc{\g}{{\mathfrak g}}
 \nc{\DG}{\widetilde{\mathfrak g}}
 \nc{\SG}{\breve{\mathfrak g}}
 \nc{\is}{{\mathbf i}}
 \nc{\V}{\mf V}
 \nc{\bi}{\bibitem}
 \nc{\E}{\mc E}
 \nc{\ba}{\tilde{\pa}}
 \nc{\half}{\frac{1}{2}}
 \nc{\hgt}{\text{ht}}
 \nc{\ka}{\kappa}
 \nc{\mc}{\mathcal}
 \nc{\mf}{\mathfrak} 
 \nc{\hf}{\frac{1}{2}}
\nc{\ov}{\overline}
\nc{\ul}{\underline}
\nc{\I}{\mathbb{I}}
\nc{\xx}{{\mf x}}
\nc{\id}{\text{id}}
\nc{\one}{\bold{1}}

\nc{\ua}{\mf{u}}
\nc{\nb}{u}
\nc{\inv}{\theta}
\nc{\mA}{\mathcal{A}}
\newcommand{\TT}{\mathbf T}
\newcommand{\TA}{{}_\A{\TT}}
\newcommand{\tK}{\widetilde{K}}
\newcommand{\al}{\alpha}
\newcommand{\Fr}{\bold{Fr}}

\nc{\Qq}{\Q(v)}
\nc{\U}{\bold{U}}
\nc{\uu}{\mathfrak{u}}
\nc{\Udot}{\dot{\U}}
\nc{\udot}{\dot{\mathfrak{u}}}
\nc{\f}{\bold{f}}
\nc{\fprime}{\bold{'f}}
\nc{\B}{\bold{B}}
\nc{\Bdot}{\dot{\B}}
\nc{\Dupsilon}{\Upsilon^{\vartriangle}}
\newcommand{\T}{\texttt T}
\newcommand{\vs}{\varsigma}
\newcommand{\Pa}{{\bf{P}}}
\newcommand{\Padot}{\dot{\bf{P}}}

\nc{\ipsi}{\psi_{\imath}}
\nc{\Ui}{{\bold{U}^{\imath}}}
\nc{\uidot}{\dot{\mathfrak{u}}^{\imath}}
\nc{\Uidot}{\dot{\bold{U}}^{\imath}}
 \nc{\be}{e}
 \nc{\bff}{f}
 \nc{\bk}{k}
 \nc{\bt}{t}
 \nc{\BLambda}{{\Lambda_{\inv}}}
\nc{\Ktilde}{\widetilde{K}}
\nc{\bktilde}{\widetilde{k}}
\nc{\Yi}{Y^{w_0}}
\nc{\bunlambda}{\Lambda^\imath}
\newcommand{\Iwhite}{\I_{\circ}}
\nc{\ile}{\le_\imath}
\nc{\il}{<_{\imath}}

\newcommand{\ff}{B}


\nc{\etab}{\eta^{\bullet}}
\newcommand{\Iblack}{\I_{\bullet}}
\newcommand{\wb}{w_\bullet}
\newcommand{\UIblack}{\U_{\Iblack}}

\newcommand{\blue}[1]{{\color{blue}#1}}
\newcommand{\red}[1]{{\color{red}#1}}
\newcommand{\green}[1]{{\color{green}#1}}
\newcommand{\white}[1]{{\color{white}#1}}

\newcommand{\huanchentodo}{\todo[inline,color=orange!20, caption={}]}
\newcommand{\wtodo}{\todo[inline,color=green!20, caption={}]}

\newcommand{\dvd}[1]{t_{\odd}^{{(#1)}}}
\newcommand{\dvp}[1]{t_{\ev}^{{(#1)}}}
\newcommand{\ev}{\mathrm{ev}}
\newcommand{\odd}{\mathrm{odd}}

 \author[Huanchen Bao]{Huanchen Bao}
\address{Department of Mathematics, National University of Singapore, Singapore 119076, Singapore.}
\email{huanchen@nus.edu.sg}

 \author[Thomas sale]{Thomas sale}
\address{Department of Mathematics, University of Virginia, Charlottesville, Virginia 22904, United states.}
\email{tws2mb@virginia.edu}

\title[Quantum symmetric pairs at roots of $1$]
{Quantum symmetric pairs at roots of $1$}

\begin{abstract}
	A quantum symmetric pair is a quantization of the symmetric pair of universal enveloping algebras. Recent development suggests that most of the theory for quantum groups can be generalised to the setting of quantum symmetric pairs. In this paper, we study the $\imath$quantum group at roots of $1$. We generalize Lusztig's quantum Frobenius morphism in this new setting. We define the small $\imath$quantum group and compute its dimension. 
\end{abstract}

\maketitle

\let\thefootnote\relax\footnotetext{{\em 2010 Mathematics Subject Classification.} Primary 17B10.}
\setcounter{tocdepth}{1}
\tableofcontents

\section{Introduction}

\subsection{} 
Let $\U$ be the quantum group of a semisimple complex Lie algebra $\mathfrak{g}$ with a parameter $v$. When $v$ is a root of $1$, the algebra $\U$ is closely related with the modular representation theory of algebraic groups and the representation theory of affine Lie algebras; cf. \cite[\S0.6]{Lu90a} and \cite{AJS94}.


\subsection{}

Let $\mA = \Z[v,v^{-1}]$  with a generic parameter $v$. We denote by $_{\mA}\U$ the $\mA$-form of $\U$, which is an $\mA$-subalgebra of $\U$ generated by the divided powers $E_{i}^{(n)}$ and $F_{i}^{(n)}$ for various  simple root $i$ and $n \in \Z_{\ge 0}$. 

Let $l$ be an odd positive integer, and prime to $3$ if $\U$ has a component of type $G_2$. Let $\mA'$ be the quotient of $\mA$ by the two-sided ideal generated by the $l$-th cyclotomic polynomial, that is, $v$ is an $l$-th root of $1$ in $\mA'$. Let ${}_{\mA'} \U = \mA' \otimes_{\mA} {}_{\mA}\U $.

Let ${}_{\mA'}\U^*$ be the universal enveloping algebra (over $\mA'$) of the Lie algebra $\mathfrak{g}$ (we refer to \S\ref{subsub:Ustar} for a more precise definition of the algebra ${}_{\mA'}\U^*$ in terms of root data). 

Lusztig  \cite[Theorem~8.10]{Lu90b}  defined the quantum Frobenius morphism
\begin{align}
\label{eq:LuFr} 
\Fr: {}_{\mA'}\U &\longrightarrow {}_{\mA'}\U ^{*}, \\
\notag E_{i}^{(n)} &\mapsto   
	\begin{cases}
	E_{i}^{(n/l)},  &\mbox{for $n \in l\mathbb{Z}$}; \\  
	0,  &\mbox{otherwise};
	\end{cases}
\\
	\notag	F_{i}^{(n)}  &\mapsto  
	\begin{cases}
	F_{i}^{(n/l)} ,  &\mbox{for $n \in l\mathbb{Z}$}; \\ 
	0,  &\mbox{otherwise}.
	\end{cases}
\end{align}

Lusztig also defined the small quantum group $ {\mathfrak{u}}$ as an $\mA'$-subalgebra of ${}_{\mA'}\U$ generated by $F_i^{(a)}  $, $E_i^{(a)}  $ with $0 \le a < l$. The algebra $ {\mathfrak{u}}$ should be viewed as the ``Frobenius kernel'' of the morphism \eqref{eq:LuFr} (cf. \cite[Appendix]{GK93}).

\subsection{}A quantum symmetric pair (QSP) $(\U, \Ui)$ is a quantization of the symmetric pair of enveloping algebras $(\U(\mathfrak{g}), \U(\mathfrak{g}^\theta))$, where $\theta : \mathfrak{g} \rightarrow \mathfrak{g}$ denotes an involution of Lie algebras. 
 They were originally developed by Letzter \cite{Le99} to study quantum homogeneous spaces and reflection equations in finite type, some of which was generalized to Kac-Moody type by Kolb \cite{Ko14}.  The algebra $\Ui$ is a coideal subalgebra of $\U$, which we often call the $\imath$quantum group.


Recent development in the theory of quantum symmetric pairs suggests that  many of the fundamental constructions and results for quantum groups can be generalised to the setting of quantum symmetric pairs. In \cite{BW18a, BW18b, BW18c}, the first author and Wang developed the theory of canonical bases  arising from  quantum symmetric pairs (called the $\imath$canonical bases). The integral form of the $\imath$quantum group is spanned by the $\imath$canonical basis over $\mA$, and generated by the $\imath$divided powers introduced in \cite{BW18c} as an $\mA$-algebra. The $\imath$divided powers are the generalization of the usual divided powers, but are far more complicated.

 \subsection{}Since $\Ui$ lacks suitable triangular decomposition, we generally consider the modified form $\Uidot$, together with the modified quantum group $\Udot$. The modified quantum group $\Udot$ (resp. $\Uidot$) is obtained by replacing the identity element in $\U$ (resp. $\Ui$) with a collection of idempotents. 
 
 We have the natural generalization of the quantum Frobenius morphism in the setting of $\Udot$, as well as the modified small quantum group $\dot{\mathfrak{u}}$ as established in \cite[\S35.1.9]{Lu}.


\subsection{}Let ${}_{\mA}\Uidot$ be the integral form of the modified $\imath$quantum group.
In this paper, we study the modified $\imath$quantum group ${}_{\mA'}\Uidot = \mA' \otimes_{\mA} {}_\mA\Uidot $ at the $l$-th root of $1$, where $l$ is a positive odd integer that is relatively prime to $3$ if $\U$ has a component of type $G_{2}$. We remark that while one can consider more general $l$ in the setting of quantum groups \cite[Chap.~35]{Lu}, it is much more restricted in the setting of $\imath$quantum groups (cf. Remark~\ref{rem:even}).

We prove that Lusztig's quantum Frobenius morphism restricts to an algebra homomorphism on modified $\imath$quantum groups in Theorem~\ref{thm:main}. We define the (modified) small $\imath$quantum group ${}_{\mA'}\dot{\mathfrak{u}}^\imath$ and compute its dimension in Theorem~\ref{thm:main2}. The $\imath$divided powers of the  $\mA$-algebra ${}_\mA\Uidot$ studied in \cite{BW18c, BeW18} play important roles in this paper. 

\subsection{}This paper is organised as follows. We recall quantum groups and quantum symmetric pairs in Section~\ref{sec:2}. We define and study the $\imath$quantum group $\U^{*,\imath}$ in Section~\ref{sec:3}.
We explicitly compute the restriction of the quantum Frobenius morphism on ${}_{\mA'}\Uidot$ in Section~\ref{sec:4}. 
We define the modified small $\imath$quantum group and compute its dimension in Section~\ref{sec:5}.

\vspace{.2cm}

{\bf Acknowledgement: }The authors would like to thank Weiqiang Wang for his helpful discussions and for encouraging the collaboration. HB is supported by a NUS start-up grant. TS was partially supported by Wang's NSF Grant DMS-1702254.

\section{Preliminaries}\label{sec:2}

In this section, we follow the conventions of \cite{Lu} and \cite{BW18b}.

\subsection{Cartan data and root data}
\subsubsection{}
Let $\langle Y, X, \dots \rangle$ be a root datum of finite type $(\I, \cdot)$; cf. \cite[\S 1.1.1]{Lu}. Let $\Phi \subset X$ (resp. $\Phi^+ \subset X$) be the set of roots (resp. positive roots). Let $\Phi^\vee \subset Y$ (resp. $\Phi^{\vee,+} \subset Y$) be the set of coroots (resp. positive coroots).

Let $v$ be an indeterminate and write $\mathcal{A} = \mathbb{Z}[v,v^{-1}].$ We define, for $n \in \mathbb{Z}$ and $a \in \mathbb{Z}_{\geq0}$, 
\[
[n] = \frac{v^{n} - v^{-n}}{v - v^{-1}}, \quad  [n]^{!} = [n][n-1]. . .[2][1], \quad \left[ \begin{array}{cc|r} n \\ a \end{array} \right] = \frac{[n]^{!}}{[n-a]^{!}[a]^{!}}.
\]
 For any $ i \in \I$, we define $v_{i} := v^{(i \cdot i)/2}$ and also 
\[
[n]_i = \frac{v_i^{n} - v_i^{-n}}{v_i - v_i^{-1}}, \quad  [n]_i^{!} = [n]_i[n-1]_i. . .[2]_i[1]_i, \quad \left[ \begin{array}{cc|r} n \\ a \end{array} \right]_i = \frac{[n]_i^{!}}{[n-a]_i^{!}[a]_i^{!}}.
\]

We also define
\[
[n]_{v^2_i} = \frac{v_i^{2n} - v_i^{-2n}}{v^2_i - v_i^{-2}}, \text{ and similarly define } \quad  [n]_{v^2_i}^{!}, \quad \left[ \begin{array}{cc|r} n \\ a \end{array} \right]_{v^2_i}.
\]
\subsubsection{}\label{subsec:stardatum}
Throughout this paper, we assume that $l > 0$ is an odd integer, and $l$ is prime to $3$ if the Cartan datum has a factor of $G_2$. We define a new Cartan datum $(\I, \circ)$ with the same $\I$ and the pairing 
\[
 i \circ j = (i \cdot j)l_{i}l_{j}, 
\]
where $l_i$ is the smallest positive integer such that $l_{i}(i \cdot i)/2 \in l\mathbb{Z}$; cf. \cite[\S2.2.4]{Lu}.

We  further define a new root datum $(Y^{*},X^{*}, \dots)$ of type $(\mathbb{I}, \circ)$ following  \cite[\S 2.2.5]{Lu}. Define $X^{*} = \{ \la \in X | \mbox{$\langle i, \la \rangle \in l_i\mathbb{Z}$ for all $i \in I$} \}$ and $Y^{*} = \mathrm{Hom}(X^{*}, \mathbb{Z})$ with the obvious bilinear pairing, which we denote by $\langle \cdot,\cdot \rangle^*$. The map $\I \rightarrow X^*$ is given by $i \mapsto i\rq{}^* = l_i i\rq{} \in X$. The map $\I \rightarrow Y^*$ associates to $i \in \I$ the element $i^* \in Y^*$ such that 
$\langle i^*, \zeta \rangle^* = \langle i, \zeta \rangle / l_i$ for any $\zeta \in X^*$.

We define $v_{i}^{*} :=v_{i}^{(i \circ i)/2}$. We also define $[n]^*_i$, $([n]_i^*)^{!}$,  $\left[ \begin{array}{cc|r} n \\ a \end{array} \right]_i^*$ in the obvious way, for $i \in \I$. 

\begin{lem}\label{lem:sametype}
	We have $	\langle i^*, j\rq{}^{*} \rangle ^* = \langle i, j\rq{} \rangle$ for any $i, j \in \I$.
\end{lem}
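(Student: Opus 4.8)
The plan is to verify the identity by directly unwinding the definitions of $\langle\cdot,\cdot\rangle^*$, of $i^*$, and of $j'^*$; once this is done the statement reduces to the single numerical fact that $l_i=l_j$ whenever $i\cdot j\neq 0$. First I would rewrite the left-hand side. Since $j'^*=l_j j'$ lies in $X^*$ (part of Lusztig's construction in \cite[\S 2.2.5]{Lu}), the pairing $\langle i^*,\cdot\rangle^*$ can be evaluated on it through the defining property of $i^*$, which gives
\[
\langle i^*, j'^*\rangle^* \;=\; \frac{\langle i, j'^*\rangle}{l_i} \;=\; \frac{l_j\,\langle i, j'\rangle}{l_i} \;=\; \frac{l_j}{l_i}\cdot\frac{2(i\cdot j)}{i\cdot i},
\]
where $\langle\cdot,\cdot\rangle$ is the original pairing $Y\times X\to\Z$ and I used the root datum axiom $\langle i,j'\rangle = 2(i\cdot j)/(i\cdot i)$. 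Thus the assertion is equivalent to $(l_j/l_i)\langle i,j'\rangle=\langle i,j'\rangle$, which is automatic when $i\cdot j=0$ (both sides vanish) and otherwise reduces to $l_i=l_j$.

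It remains to prove that $i\cdot j\neq 0$ forces $l_i=l_j$, which is the only place the hypotheses on $l$ are used. Put $d_k=(k\cdot k)/2$, so $l_k$ is the least positive integer with $l_k d_k\in l\Z$, i.e. $l_k=l/\gcd(l,d_k)$. Finiteness of the Cartan datum $(\I,\cdot)$ forces $\langle i,j'\rangle\langle j,i'\rangle=(i\cdot j)^2/(d_i d_j)\le 3$; since $\langle i,j'\rangle=(i\cdot j)/d_i$ and $\langle j,i'\rangle=(i\cdot j)/d_j$ are nonzero negative integers, one of them equals $-1$, and choosing the labelling so that $d_i\le d_j$ we get $d_j/d_i=\langle i,j'\rangle\langle j,i'\rangle\in\{1,2,3\}$, the value $3$ occurring only when $(\I,\cdot)$ has a factor of $G_2$. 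A short case check then finishes the lemma: for $d_j=d_i$ there is nothing to prove; for $d_j=2d_i$ one has $\gcd(l,d_j)=\gcd(l,d_i)$ because $l$ is odd; and for $d_j=3d_i$ the datum has a $G_2$ factor, so $l$ is prime to $3$ by hypothesis and again $\gcd(l,d_j)=\gcd(l,d_i)$. In every case $l_i=l_j$, and then the displayed computation gives $\langle i^*,j'^*\rangle^*=\langle i,j'\rangle$.

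I do not expect a serious obstacle here: the content is concentrated in the last paragraph, where one must check that ``$l$ odd'' and ``$l$ prime to $3$ in the presence of a $G_2$ factor'' are exactly what is needed to cancel the factors $2$ and $3$ in $d_j/d_i$ — this is also where those hypotheses first get used. The only other point to be careful about is to invoke \cite[\S 2.2.5]{Lu} for the fact that $j'^*=l_j j'$ really lands in $X^*$, so that $\langle\cdot,\cdot\rangle^*$ is defined on it; everything else is formal.
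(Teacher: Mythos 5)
Your proof is correct and follows the same route as the paper's: unwind the definitions of $\langle\cdot,\cdot\rangle^*$, $i^*$, and $j'^*$ to reduce the identity to $l_i=l_j$ when $i\cdot j\neq 0$, then use the finite-type bound on $\langle i,j'\rangle\langle j,i'\rangle$ together with $l$ odd (and prime to $3$ in the $G_2$ case). The paper's version is terse — it merely asserts that $l_i=l_j$ follows — so your careful account via $l_k=l/\gcd(l,d_k)$ and the case analysis on $d_j/d_i\in\{1,2,3\}$ is a genuinely more complete write-up of the same argument.
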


\begin{proof}
Recall $\langle i^*, j\rq{}^{*} \rangle ^* = \langle i, l_j j\rq{}\rangle/ l_i $. If $  \langle i, j\rq{}\rangle = 0$ or $j = i$, we trivially have $\langle i^*, j\rq{}^{*} \rangle ^*  = \langle i, j\rq{}\rangle$. Otherwise, we could have $\langle i, j\rq{}\rangle = 1, 2, 3$ thanks for our finite type assumption. Then since $l$ is odd and prime to $3$ if there is a $G_2$ factor, we must have $l_i = l_j$. The lemma follows.
\end{proof} 

As a consequence of the proof, we must have $l=l_i$ for all $i \in \I$ (cf. \cite[\S8.4]{Lu90b}).
\subsection{Quantum groups}

\subsubsection{}
Given a root datum $(Y, X,\dots)$ of type $(\mathbb{I}, \cdot)$, let $\U$ be the associated quatnum group over $\Qq$ generated by $E_{i}$, $F_{i}$, and $K_{\mu}$, for all $i \in I$ and $\mu \in Y$. We write  $E_{i}^{(n)} = E_{i}^{n}/[n]_{i}^{!}$ and $F_{i}^{(n)} = F_{i}^{n}/[n]_{i}^{!}$ for $n \in \mathbb{Z}_ {\ge 0}$.


Recall \cite[\S 23.1]{Lu} the modified version of $\U$, denoted by $\dot{\U}$. The algebra $\dot{\U}$ has the direct sum decomposition
\[
\dot{\U} = \bigoplus_{\la, \la' \in X}  {}_{\la}\U_{\la'}.
\]
The $\mathcal{A}$-form $_{\mathcal{A}}\dot{\U}$ is the $\mA$-subalgebra of  $\dot{\U}$  generated by:
\[
E_{i}^{(n)}1_{\la} = 1_{\la + n i'}E_{i}^{(n)} = 1_{\la + n i'}E_{i}^{(n)}1_{\la} \in {}_{\la + ni'}\U_{\la}
,\]
and
\[
F_{i}^{(n)}1_{\la} = 1_{\la - ni'}F_{i}^{(n)} = 1_{\la - ni'}F_{i}^{(n)}1_{\la} \in {}_{\la - ni'}\U_{\la}
,\]
for various $n \in \mathbb{Z}_{\geq 0}$, $i \in \I$, $\lambda \in X$.

We define the modified quantum group with scalars in a commutative $\mathcal{A}$-algebra $R$ as follows:
\[
 {}_{R}\dot{\U} := R \otimes_{\mathcal{A}} {}_\mathcal{A}\dot{\U}.
\]


\subsubsection{}\label{subsec:com}
We consider the completion ${\Udot}^\wedge$ of $\Udot$ as follows; cf. \cite[\S36.2.3]{Lu}. Recall any element in $\Udot$ can be written uniquely as a (finite) sum $\sum_{\lambda, \lambda'} x_{\lambda,\lambda'}$ for $x_{\lambda,\lambda'} \in {}_\lambda\U_{\lambda'}$. We consider infinite summation of the form 
\[
\sum_{\lambda, \lambda'} x_{\lambda,\lambda'},
\]
as long as there is a finite set $F \in X$ such that $x_{\lambda,\lambda'} =0$ unless $\lambda - \lambda' \in F$. The algebra structure of $\Udot$ extends naturally to an algebra structure of $\Udot^\wedge$. We similarly define ${}_\mA \Udot^\wedge$ and ${}_R \Udot^\wedge$ for any $\mA$-algebra $R$.

\subsubsection{}
\label{subsub:Ustar}
Let $\U^*$ be the quantum group over the field $\Qq$ associated with the root datum 
$(Y^{*},X^{*}, \dots)$ of type $(\mathbb{I}, \circ)$, generated by (by abuse of notations) $E_{i}$, $F_{i}$, and $K_{\mu}$, for all $i \in I$ and $\mu \in Y^*$. 

We abuse the notations and write $E_{i}^{(n)} := E_{i}^{n}/([n]_{i}^*)^{!} \in \U^*$ and $F_{i}^{(n)} := F_{i}^{n}/([n]_{i}^*)^{!} \in \U^*$. We similarly define $\dot{\U}^*$, ${}_{\mathcal{A}}\dot{\U}^*$, $_{R}\U^*$, ${}_{R}\dot{\U}^*$. 

We also define similarly  the completions ${\Udot}^{*,\wedge}$, ${}_\mA{\Udot}^{*,\wedge}$, as well as  ${}_R{\Udot}^{*,\wedge}$ for any $\mA$-algebra $R$.


\subsubsection{} Let $\mA\rq{}$ be the quotient of  $\mA$ by the two-sided ideal generated by the $l$-th cyclotomic polynomial $f_{l} \in \mA$. Recall that $(f_1, f_2, f_3, \dots) = (v-1, v+1, v^2+v+1, \dots)$. We denote by $\phi : \mA \rightarrow \mA\rq{}$ the quotient map. 

\begin{thm}\cite[\S35.1.9]{Lu}\label{thm:LuFr}
There is a homomorphism of $\mA\rq{}$-algebras $\bold{Fr}: {}_{\mA\rq{}}\dot{\U} \rightarrow {}_{\mA\rq{}}\dot{\U}^{*}$ such that
\[ 
\bold{Fr}: E_{i}^{(n)}1_{\la} \mapsto 
	\begin{cases}
		E_{i}^{(n/l)}1_{\la},&\text{for $n \in l\mathbb{Z}$, and $\la \in X^{*}$};\\
		0,  &\text{otherwise}.
	\end{cases}
	\]
and
\[ 
\bold{Fr}: F_{i}^{(n)}1_{\la} \mapsto 
	\begin{cases}
		F_{i}^{(n/l)}1_{\la}, &\mbox{for $n \in l\mathbb{Z}$, and $\la \in X^{*}$}; \\
		0,  &\mbox{otherwise}.
	\end{cases}
\]
\end{thm}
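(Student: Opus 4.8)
The plan is to reduce the assertion to Lusztig's non-modified quantum Frobenius morphism \eqref{eq:LuFr} (that is, \cite[Theorem~8.10]{Lu90b}), the only genuinely new ingredient being the bookkeeping of the idempotents $1_\la$. I would work from a presentation of the integral modified algebra (cf.~\cite[\S23.1]{Lu}): ${}_{\mA'}\Udot$ is generated over $\mA'$ by the divided-power idempotents $E_i^{(n)}1_\la$ and $F_i^{(n)}1_\la$ (for $i\in\I$, $n\in\Z_{\ge0}$, $\la\in X$) subject to (a) the orthogonality relations $1_\la 1_{\la'}=\delta_{\la,\la'}1_\la$; (b) the weight-shift rules $E_i^{(n)}1_\la = 1_{\la+ni'}E_i^{(n)}1_\la$ and $F_i^{(n)}1_\la = 1_{\la-ni'}F_i^{(n)}1_\la$; (c) the relations of ${}_{\mA'}\f$ (divided-power and quantum Serre relations) among the $E_i^{(n)}$'s, and separately among the $F_i^{(n)}$'s, placed after an idempotent; and (d) the mixed relations
\[
E_i^{(a)}F_i^{(b)}1_\la \;=\; \sum_{t\ge0}\,\bbinom{a-b+\langle i,\la\rangle}{t}_i\; F_i^{(b-t)}E_i^{(a-t)}1_\la \qquad (i\in\I)
\]
together with $E_i^{(a)}F_j^{(b)}1_\la = F_j^{(b)}E_i^{(a)}1_\la$ for $i\ne j$. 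One then \emph{defines} $\Fr$ by the formulas in the statement and checks that it respects (a)--(d). An alternative is to induce $\Fr$ from ${}_{\mA'}\U\to{}_{\mA'}\U^*$ along the canonical surjections of $\U$ onto the summands ${}_\la\U_{\la'}$ of $\Udot$, but then the Cartan directions of the non-modified map require extra care, so the presentation route seems cleaner.

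Relations (a) and (b), and the commutations for $i\ne j$, are immediate from the formulas, using ${i'}^*=l_i i'$ and $l_i=l$ (the remark after \lemref{lem:sametype}). Relations (c) hold because, on monomials in the $E_i^{(n)}1_\la$ with $\la\in X^*$, the assignment literally reproduces the $\f$-part of \eqref{eq:LuFr}, while any monomial containing some $1_\la$ with $\la\notin X^*$ is sent to $0$ on both sides, that idempotent being propagated through the whole monomial by (b).

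The crux is the relation $E_i^{(a)}F_i^{(b)}1_\la = \ldots$ in (d). Applying the assignment to both sides and multiplying out inside ${}_{\mA'}\Udot^*$, a term on the right survives only when $l\mid (a-t)$, $l\mid (b-t)$ and $\la\in X^*$, and its coefficient $\bbinom{a-b+\langle i,\la\rangle}{t}_i$ must then be evaluated at $v_i$, a primitive $l$-th root of $1$. Here I would invoke Lusztig's Lucas-type congruence for Gaussian binomials at an $l$-th root of $1$ (see e.g.~\cite[\S34.1]{Lu}): expanding the entries in base $l$, the coefficient $\bbinom{m}{t}_i$ factors as a quantum binomial in the lowest digits times an ordinary binomial in the higher digits, and in particular vanishes when $l\mid m$ but $l\nmid t$. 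Since $\langle i,\la\rangle\in l_i\Z=l\Z$ for $\la\in X^*$ and $\langle i,\la\rangle/l_i=\langle i^*,\la\rangle^*$, one finds: if $l\mid a$ and $l\mid b$, the right-hand side collapses, modulo $f_l$, to relation (d) for ${}_{\mA'}\Udot^*$ with data $a/l$, $b/l$, $\langle i^*,\la\rangle^*$, hence to $E_i^{(a/l)}F_i^{(b/l)}1_\la = \Fr(E_i^{(a)}F_i^{(b)}1_\la)$; if $l\nmid a$ or $l\nmid b$, every surviving $t$ satisfies $t\equiv a\equiv b\not\equiv 0\pmod l$ while $a-b+\langle i,\la\rangle\equiv 0\pmod l$, so its coefficient dies in $\mA'$, matching $\Fr(E_i^{(a)}F_i^{(b)}1_\la)=0$; and if $\la\notin X^*$ both sides are $0$.

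I expect the main obstacle to be exactly this verification: running through all residues of $a$, $b$, $t$ modulo $l$ together with the dichotomy $\la\in X^*$ versus $\la\notin X^*$, while lining up the base-$l$ digit expansions on the two sides. It is precisely here that the standing hypotheses --- $l$ odd, and prime to $3$ in type $G_2$, which force $l=l_i$ for every $i\in\I$ and make each $v_i$ a primitive $l$-th root of $1$ --- enter essentially; with those in hand, the rest is formal.
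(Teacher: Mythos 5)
This theorem is quoted from \cite[\S35.1.9]{Lu}; the paper supplies no proof of it, so there is no in-paper argument to compare against. Your presentation-based reduction is a faithful reconstruction of Lusztig's own proof: defining $\Fr$ on the divided-power generators $E_i^{(n)}1_\la$, $F_i^{(n)}1_\la$, propagating the idempotents via relations (a)--(b), reducing the Serre-type relations (c) to the nonmodified Frobenius on $\f$ from \cite[Theorem~8.10]{Lu90b}, and checking the mixed $E_iF_i$-commutation (d) by the Lucas congruence for Gaussian binomials (\cite[Lemma~34.1.2]{Lu}) together with $l_i=l$ and $\langle i,\la\rangle\in l\Z$ for $\la\in X^*$ are precisely Lusztig's ingredients. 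Two small points you gloss over but that are routine: when $l\nmid a$ or $l\nmid b$ with $a\not\equiv b\pmod l$ no summand survives the simultaneous divisibility constraints at all, and the vanishing of $\bbinom{m}{t}_i$ for $l\mid m$, $l\nmid t$ must also be invoked for negative $m$, where Lusztig's lemma still applies.
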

Note that $\bold{Fr}$ extends natually to an algebra homomorphism 
\[
\bold{Fr} : {}_{\mA\rq{}}\dot{\U}^\wedge \rightarrow {}_{\mA\rq{}}\dot{\U}^{*, \wedge}.
\]


\subsection{Quantum symmetric pairs}
\subsubsection{}
 Let $\tau$ be an involution of the Cartan datum $(\I, \cdot)$; we allow $\tau =1$. We further assume that $\tau$ extends to  an involution on $X$ and an involution on $Y$, respectively, such that the perfect bilinear pairing is invariant under the involution $\tau$. 

Let $\I_{\bullet} \subset \I$. We have a subroot datum $(X, Y, \dots)$ of type $(\I_\bullet, \cdot)$.  Let $W_{\I_\bullet}$ be the parabolic subgroup of $W$ with $w_{\bullet}$ as  its longest element.  Let $\Phi_\bullet \subset X$ (resp. $\Phi^+_\bullet \subset X$) be the set of roots (resp. positive roots). Let $\Phi^\vee_\bullet \subset Y$ (resp. $\Phi^{\vee,+}_\bullet \subset Y$) be the set of coroots (resp. positive coroots).

Let $\rho^\vee_{\bullet}$ be the half sum of all positive coroots in the set $\Phi^{\vee}_{\bullet}$, 
and let $\rho_{\bullet}$ be the half sum of all positive coroots in the set $\Phi _{\bullet}$. 
We shall write 
\begin{equation}
\label{eq:white}
 \I_{\circ} = \I \backslash \I_{\bullet}.
\end{equation}
%

A pair $(\I_{\bullet}, \tau)$ is called {\em admissible} (cf. \cite[Definition~2.3]{Ko14}) if the following conditions are satisfied (with respect to the root datum $(X,Y, \dots)$ of type $(\I, \cdot)$):
\begin{itemize}
	\item	[(1)]	$\tau (\I_{\bullet}) = \I_{\bullet}$; 
	\item	[(2)]	The action of $\tau$ on $\I_{\bullet}$ coincides with the action of $-w_{\bullet}$; 
	\item	[(3)]	If $j \in \I_{\circ}$ and $\tau(j) = j$, then $\langle \rho^\vee_{\bullet}, j' \rangle \in \Z$.
\end{itemize}

\subsubsection{}

Let
$ \inv =  -w_{\bullet} \circ \tau$
be an involution of $X$ and $Y$. Following \cite{BW18b}, we introduce the $\imath$-weight lattice and $\imath$-root lattice
\begin{align}
  \label{XY}
 \begin{split}
X_{{\imath}} = X /  \breve{X}, & \quad \text{ where } \; \breve{X}  = \{ \la - \inv(\la) \vert \la \in X\},
 \\
Y^{\imath} &= \{\mu \in Y \big \vert \inv(\mu) =\mu \}.
\end{split}
\end{align}
For any $\la \in X$, we denote its image in $X_{\imath}$ by $\overline{\la}$.

The involution $\tau$ of $\I$ induces an automorphism of the $\Q(v)$-algebra $\U$, denoted also by $\tau$,
under which $E_i \mapsto E_{\tau i}, F_i \mapsto F_{\tau i}$, and $K_\mu \mapsto K_{\tau \mu}$. 

\subsubsection{}
We recall here the definition of quantum symmetric pair $(\U, \Ui)$ following \cite[\S3.3]{BW18b}. 

\begin{definition}\label{def:Ui}
The algebra $\Ui$, with parameters 
\begin{equation}
  \label{parameters}
\vs_{i} \in \pm v^\Z, \quad \kappa_i \in \Z[v,v^{-1}], \qquad \text{ for }  i \in \I_{\circ},
\end{equation}
is the $\Qq$-subalgebra of $\U$ generated by the following elements:
\begin{align*}
F_{i}  &+ \vs_i \T_{w_{\bullet}} (E_{\tau i}) \widetilde{K}^{-1}_i 
+ \kappa_i \widetilde{K}^{-1}_{i} \, (i \in \I_{\circ}), 
 \\
& \quad K_{\mu} \,(\mu \in Y^{\imath}), \quad F_i \,(i \in \I_{\bullet}), \quad E_{i} \,(i \in \I_{\bullet}).
\end{align*}
The parameters are required to satisfy Conditions \eqref{kappa}-\eqref{vs2}: 
\begin{align}
 \label{kappa}
 \begin{split}
\kappa_i &=0 \; \text{ unless } \tau(i) =i, \langle i, j' \rangle = 0 \; \forall j \in \Iblack,
\\
&  
\qquad 
\text{ and } \langle k,i' \rangle \in 2\Z \; \forall k = \tau(k) \in \Iwhite \text{ such that } \langle k, j' \rangle = 0 \; \text{ for all } j \in \Iblack;
\end{split}
\\
\overline{\kappa_i} &= \kappa_i;   \label{kappa2}   
\\
\vs_{i} & =\vs_{{\tau i}} \text{ if }    i \cdot \theta (i) =0;
\label{vs=}
\\
\vs_{{\tau i}} &= (-1)^{ \langle 2\rho^\vee_{\bullet},  i' \rangle } v_i^{-\langle i, 2\rho_{\bullet}+\wb\tau i ' \rangle} {\ov{\vs_{i} }}.   \label{vs2}
\end{align}
\end{definition}

\begin{rem}
Note that we require $\vs_{i} \in \pm v^\Z$, instead of $\vs_{i} \in \Z[v,v^{-1}]$ as in \cite{BW18c}. Note that such $\vs_i$ always exists by \cite[Remark 3.14]{BK15}, thanks to the establishment of the \cite[Conjecture~2.7]{BK15} in \cite[Theorem~4.1]{BW18c}.
\end{rem}

%

We denote by $\Uidot$ the modified coideal subalgebra as defined in \cite[\S3.7]{BW18b}. It follows from \cite[\S3.7]{BW18b} that $\Udot$ is naturally a $\Uidot$-bimodule.


\subsubsection{}
We recall here the integral from ${}_\mA \Uidot$ of the modified coideal subalgebra $\Uidot$. 

	\begin{definition}\cite[Definition~3.10]{BW18c}
	We define ${}_\mA \Uidot$ to be the set of elements $u \in \Uidot$ such that $u \cdot m \in{}_\mA\Udot$ for all $m \in {}_\mA\Udot$.
	\end{definition}
\begin{prop}\cite[Theorem~5.1, Theorem~7.2, Corollary~7.5]{BW18c}\label{prop:idiv}
	\begin{enumerate}
		\item ${}_\mA \Uidot$ is a free $\mA$-module.
		\item ${}_\mA \Uidot$ is generated as an $\mA$-subalgebra of $\Uidot$ by the $\imath$-divided powers $B^{(a)}_{i, \zeta}$ ($ i \in \I$) and $E^{(a)}_{j} 1_{\zeta}$ ($j \in \Iblack$) for $\zeta \in X_\imath$ and $a \ge 0$. 
	\end{enumerate}
\end{prop}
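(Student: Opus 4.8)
The plan is to deduce both parts from the theory of $\imath$canonical bases of \cite{BW18b} together with explicit rank-one computations. For (1), I would first recall that $\Udot$ carries Lusztig's canonical basis $\Bdot$, an $\mA$-basis of ${}_\mA\Udot$, and that $\Udot$ is a $\Uidot$-module; and that $\Uidot$ carries an $\imath$canonical basis $\Bdot^{\imath}$ from \cite{BW18b} which is bar-invariant and acts unitriangularly on the canonical bases of based $\U$-modules, so that for $b\in\Bdot^{\imath}$ the expansion of $b\cdot 1_{\lambda}$ in $\Bdot$ (with $\lambda$ over the relevant coset in $X_{\imath}$) has leading coefficient $1$ and all other coefficients in $v^{-1}\Z[v^{-1}]$ with respect to a suitable partial order. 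Granting this, I would first check $\Bdot^{\imath}\subseteq{}_\mA\Uidot$: the unitriangular action sends each element of $\Bdot$ into ${}_\mA\Udot$, hence $b\cdot{}_\mA\Udot\subseteq{}_\mA\Udot$. Conversely, writing any $u\in{}_\mA\Uidot$ as a $\Qq$-linear combination $\sum_b c_b\,b$ over $\Bdot^{\imath}$ and evaluating on canonical basis elements of ${}_\mA\Udot$, the unitriangularity lets one solve for the $c_b$ by induction on the partial order, and integrality of the structure constants forces $c_b\in\mA$. This gives ${}_\mA\Uidot=\bigoplus_{b\in\Bdot^{\imath}}\mA\,b$, which is free over $\mA$.

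For (2), I would let $\mathfrak A\subseteq\Uidot$ be the $\mA$-subalgebra generated by the $\imath$divided powers $B^{(a)}_{i,\zeta}$ ($i\in\I$) and the $E^{(a)}_j 1_\zeta$ ($j\in\Iblack$), and show $\mathfrak A={}_\mA\Uidot$. The inclusion $\mathfrak A\subseteq{}_\mA\Uidot$ reduces to checking that each generator preserves ${}_\mA\Udot$: for $j\in\Iblack$ this is Lusztig's integrality for $\Udot$, while for the $\imath$divided powers it comes down, using the weight decomposition of $\Udot$ and the coideal property, essentially to a rank-one statement, namely that $B^{(a)}_{i,\zeta}$ acts integrally in the rank-one $\imath$quantum group. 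Here one needs the explicit description of the $\imath$divided powers as normalizations of the powers $B_i^{a}$ (cf.\ Definition~\ref{def:Ui}) by products of quantum integers, with the two variants governed by the parity of a weight pairing (a dichotomy already present in rank one), and the integral action is verified from the closed and recursive formulas of \cite{BW18c, BeW18}. For the reverse inclusion I would use part (1): it suffices to show each $b\in\Bdot^{\imath}$ lies in $\mathfrak A$, and this is done by exhibiting, for each $b$, a monomial in the generators of $\mathfrak A$ whose expansion in $\Bdot^{\imath}$ has $b$ as leading term with coefficient $1$, after which a downward induction on the partial order gives $b\in\mathfrak A$. The existence of such monomials would follow from PBW-type spanning sets for $\Uidot$ \cite{BW18a, BW18b} together with the rank-one identification of top-degree $\imath$divided powers with $\imath$canonical basis elements.

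The hard part will be the rank-one integrality of the $\imath$divided powers in the first half of step (2): unlike the ordinary divided powers $E_i^{(n)}, F_i^{(n)}$, the $\imath$divided powers have genuinely intricate defining expressions — their denominators mix factors of $[n]_i$- and $[n]_{v^2_i}$-type and carry signs — so verifying $B^{(a)}_{i,\zeta}\cdot{}_\mA\Udot\subseteq{}_\mA\Udot$ is a delicate recursive computation. This is the technical core of \cite{BW18c}; freeness is \cite[Theorem~5.1]{BW18c} and the generation statement is \cite[Theorem~7.2, Corollary~7.5]{BW18c}, which we invoke directly here.
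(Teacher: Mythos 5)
The paper does not prove this proposition; it is imported verbatim by citation to \cite{BW18c}, and your proposal ultimately defers to the same citations, so the two ``proofs'' coincide. Your intervening sketch of how the argument goes in the reference (integral stabilizer definition, unitriangularity of the $\imath$canonical basis against Lusztig's canonical basis, rank-one integrality of $\imath$divided powers for one inclusion, and a leading-term/induction argument for the other) is a fair account of the machinery there, though note that freeness is really a consequence of the $\imath$canonical basis theorem (Theorem~7.2) rather than Theorem~5.1, which supplies the $\imath$divided powers and their integrality.
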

The $\imath$-divided powers were introduced in \cite[Theorem~5.1]{BW18c} in order to construct the canonical basis for quantum symmetric kac-Moody pairs. They play a crucial role in this paper.

\subsubsection{}
We consider the completion ${\Udot}^{\imath,\wedge}$ of $\Uidot$ analogous to that of $\Udot$ in \S\ref{subsec:com}  by allowing infinite summation 
\[
\sum_{\lambda, \lambda'} x_{\lambda,\lambda'}
\]
as long as there is a finite set $G \in X_\imath$ such that $x_{\lambda,\lambda'} =0$ unless $\lambda - \lambda' \in G$. 
The following lemma is straightforward.
\begin{lem}\label{lem:emd}
We have the algebra embedding 
\begin{align*}
\imath: \Udot^{\imath,\wedge} &\longrightarrow {\Udot}^\wedge,\\
	 x &\mapsto  \sum_{\lambda \in X} x \one_{\lambda}.
\end{align*}
In particular, $\imath$ restricts to embeddings $\imath: \Uidot \rightarrow {\Udot}^\wedge$, and $\imath:  {}_\mA\Udot^{\imath} \rightarrow {}_\mA{\Udot}^\wedge$. 

\end{lem}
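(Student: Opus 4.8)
The plan is to check, in order, that (i) the map $x\mapsto\sum_{\lambda\in X}x\one_\lambda$ takes values in $\Udot^\wedge$, (ii) it is an algebra homomorphism, and (iii) it is injective; the two asserted restrictions will then follow at once.

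For (i) I would recall from \cite{BW18b} that the left $\Uidot$-action on $\Udot$ is the one induced from the inclusion $\Ui\subseteq\U$. Writing $x=\sum_\zeta x\one_\zeta$ in $\Uidot$, for $\bar\lambda=\zeta$ the element $x\one_\lambda\in\Udot$ is the image of $\one_\lambda$ under the corresponding element of $\Ui$; the latter, viewed inside the $\Z[\I]$-graded algebra $\U$, is a finite sum of homogeneous pieces of degrees $\nu_1,\dots,\nu_r\in\Z[\I]$, so $x\one_\lambda$ is supported in $\bigoplus_k{}_{\lambda+\bar\nu_k}\U_\lambda$ with the finite set $\{\bar\nu_1,\dots,\bar\nu_r\}\subseteq X$ independent of $\lambda$. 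Hence $\sum_\lambda x\one_\lambda$ obeys the finiteness condition defining $\Udot^\wedge$. For a general element of the completion $\Udot^{\imath,\wedge}$ I would run the same argument blockwise, using \propref{prop:idiv}(2): every block $\one_\zeta\Uidot\one_{\zeta'}$ is spanned by products of the $\imath$-divided powers $B^{(a)}_{i,\cdot}$ and of the $E^{(a)}_j\one_\cdot$ ($j\in\Iblack$), whose weight supports are written out explicitly in \cite{BW18c} and are controlled by $\zeta-\zeta'$; since $\zeta-\zeta'$ runs over a finite subset of $X_\imath$, the weight shifts occurring in $\sum_\lambda x\one_\lambda$ again form a finite subset of $X$.

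For (ii) and (iii) I would use the compatibility $\imath(x)\cdot m=x\cdot m$ for $x\in\Udot^{\imath,\wedge}$ and $m\in\Udot$, which holds because both sides agree on $m=\one_\lambda$ by construction and $\Udot=\sum_\lambda\one_\lambda\cdot\Udot$ as a right $\Udot$-module. An element of $\Udot^\wedge$ is determined by the family $\{\xi\cdot\one_\lambda\}_{\lambda\in X}$, and $\Udot$ is a faithful left $\Uidot$-module (the $\imath$-canonical basis elements act by linearly independent operators, cf. \cite{BW18b}); therefore, for every $m$,
\[
\imath(xy)\cdot m=(xy)\cdot m=x\cdot(y\cdot m)=\imath(x)\cdot(\imath(y)\cdot m)=(\imath(x)\imath(y))\cdot m ,
\]
so $\imath(xy)=\imath(x)\imath(y)$, and $\imath(x)=0$ forces $x\cdot m=0$ for all $m$, hence $x=0$. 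Thus $\imath$ is an injective algebra homomorphism. The restriction $\imath:\Uidot\to\Udot^\wedge$ is the special case already handled in (i), and $\imath:{}_\mA\Uidot\to{}_\mA\Udot^\wedge$ is immediate from the definition of ${}_\mA\Uidot$: if $x\cdot m\in{}_\mA\Udot$ for all $m\in{}_\mA\Udot$, then in particular $x\one_\lambda=x\cdot\one_\lambda\in{}_\mA\Udot$ for every $\lambda$.

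I expect the only step that is not pure bookkeeping to be the uniformity assertion inside (i): for a single element $x$ the finiteness of the weight shifts is trivial from the $\Z[\I]$-grading, but producing one finite bound valid across all the (possibly infinitely many) blocks $\one_\zeta\Uidot\one_{\zeta'}$ that occur in an element of $\Udot^{\imath,\wedge}$ is precisely what the explicit form of the $\imath$-divided powers in \cite{BW18c} is needed for. Everything else reduces to associativity in $\Udot^\wedge$ and faithfulness of the bimodule $\Udot$.
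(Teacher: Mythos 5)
Your overall architecture — (i) check that the target is $\Udot^\wedge$, (ii) check multiplicativity via the action on $\Udot$, (iii) check injectivity via faithfulness of that action — is sound, and (ii), (iii) and the two "in particular" restrictions are argued correctly. For (iii) the appeal to faithfulness of $\Udot$ as a $\Uidot$-module is exactly what the paper's own Proposition~\ref{prop:emd} makes precise by a triangularity argument with the $\imath$-canonical basis, so that part is fine.

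The difficulty is in (i), precisely at the place you flag. The sentence ``since $\zeta-\zeta'$ runs over a finite subset of $X_\imath$, the weight shifts occurring in $\sum_\lambda x\one_\lambda$ again form a finite subset of $X$'' is a non-sequitur: the projection $X\to X_\imath=X/\breve X$ has infinite fibres, so a single value of $\zeta-\zeta'$ is compatible with infinitely many genuine $X$-weight shifts. Concretely, in type AIII$_{11}$ (so $\Iblack=\emptyset$, $\tau(1)=2$, $\theta=-\tau$, $w_\bullet=e$) one has $\alpha_1+\alpha_2=\alpha_1-\theta(\alpha_1)\in\breve X$, hence $\overline{\alpha_1+\alpha_2}=0$; consequently the elements $(B_1B_2)^n$ all have $X_\imath$-degree $0$, yet their expansions in $\U$ contain $F_1^{n}F_2^{n}$, of $X$-weight $-n(\alpha_1+\alpha_2)$. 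Thus a sum $\sum_{\zeta}(B_1B_2)^{n_\zeta}\one_\zeta$ with $n_\zeta$ unbounded is allowed by the completion condition ``$\zeta-\zeta'$ lies in a finite $G\subset X_\imath$'' even though the proposed $\imath$-image has unbounded weight shift and so is not in $\Udot^\wedge$. Invoking the explicit formulas for the $\imath$-divided powers in \cite{BW18c} does not by itself close this: the generators have bounded shift individually, but the blocks $\one_\zeta\Uidot\one_{\zeta'}$ are spanned by products of arbitrary length, and the $X_\imath$-constraint does not bound that length. To make (i) work on all of $\Udot^{\imath,\wedge}$ you would have to build into the argument (or into the reading of the definition) a finiteness condition phrased in $X$ rather than merely in $X_\imath$; as written your argument only establishes the restriction $\imath:\Uidot\to\Udot^\wedge$, where each element of $\Uidot$ is a genuinely finite combination and the bound is automatic.
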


\subsubsection{}
For a commutative $\mA$-algebra $R$, we define the modified coideal subalgebra with scalars in $R$ as 
\[
 {}_R \Uidot = R \otimes_{\mA} {}_\mA \Uidot.
\]
We similarly define $ {}_R \Udot^{\imath,\wedge}$.

\begin{prop}\label{prop:emd}
Let $R$ be a commutative $\mA$-algebra. The following induced embedding after base change is injective 
\begin{equation}\label{eq:emb}
\imath:  {}_R \Udot^{\imath} \longrightarrow {}_R{\Udot}^\wedge.
\end{equation}
\end{prop}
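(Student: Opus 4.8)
The plan is to deduce the injectivity of the base-changed map $\imath\colon {}_R\Udot^{\imath}\to{}_R\Udot^{\wedge}$ from the structural facts about ${}_\mA\Uidot$ recalled in Proposition~\ref{prop:idiv}, together with the integral embedding $\imath\colon{}_\mA\Udot^{\imath}\to{}_\mA\Udot^{\wedge}$ of Lemma~\ref{lem:emd}. Since ${}_\mA\Udot^{\imath}$ is a free $\mA$-module by Proposition~\ref{prop:idiv}(1), base change is harmless on the source: ${}_R\Udot^{\imath}=R\otimes_\mA {}_\mA\Udot^{\imath}$ is a free $R$-module on a chosen $\mA$-basis, in particular there is no ambiguity about what the elements of ${}_R\Udot^{\imath}$ are. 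The first step is therefore to fix an $\mA$-basis $\{b_\alpha\}$ of ${}_\mA\Udot^{\imath}$ and record that $\{1\otimes b_\alpha\}$ is an $R$-basis of ${}_R\Udot^{\imath}$.

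Next I would identify a suitable target to detect injectivity. The completed algebra ${}_\mA\Udot^{\wedge}$ (and ${}_R\Udot^{\wedge}$) decomposes as a product over the weight-grading $(\lambda,\lambda')$ of the $\mA$-modules (resp. $R$-modules) ${}_\lambda\U_{\lambda'}$, each of which is free over $\mA$ with a basis built from the canonical (or PBW) basis of $\Udot$. Tensoring with $R$ commutes with the (restricted) products appearing here because the grading-pieces live in a single coset $\lambda-\lambda'\in F$ for a finite set $F$, so on each piece base change is again just extending scalars of a free module. Thus ${}_R\Udot^{\wedge}$ is, componentwise, $R\otimes_\mA$ of the corresponding component of ${}_\mA\Udot^{\wedge}$. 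Under the integral embedding $\imath$ of Lemma~\ref{lem:emd}, the image of a basis element $b_\alpha$ is the infinite sum $\sum_{\lambda} b_\alpha \one_\lambda$, whose components are explicit elements of the various ${}_\lambda\U_{\lambda'}$; since $\imath$ is injective and ${}_\mA\Udot^{\imath}$ is free, the family $\{\imath(b_\alpha)\}$ is $\mA$-linearly independent in ${}_\mA\Udot^{\wedge}$, and I would want to upgrade this to: the coordinate vectors of the $\imath(b_\alpha)$ with respect to the $\mA$-bases of the weight components form a matrix that remains injective after applying $\otimes_\mA R$.

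The cleanest way to get that upgrade is to observe that $\imath$ is split over $\mA$, or at least that its image is a direct summand. Concretely, I would argue that for each $b_\alpha$ there is a distinguished ``leading'' weight component — coming from the triangular/leading-term structure of the $\imath$-divided powers and $\imath$-canonical basis established in \cite{BW18c} — and that these leading components, indexed by $\alpha$, are part of an $\mA$-basis of the relevant free modules. This gives an $\mA$-linear retraction $r\colon{}_\mA\Udot^{\wedge}\to{}_\mA\Udot^{\imath}$ with $r\circ\imath=\id$. Applying $R\otimes_\mA(-)$ to $r\circ\imath=\id$ yields $(R\otimes r)\circ(R\otimes\imath)=\id$ on ${}_R\Udot^{\imath}$, and since $R\otimes\imath$ is identified with the base-changed embedding \eqref{eq:emb} (using that base change commutes with the completion on the target as noted above), the map \eqref{eq:emb} has a left inverse and is in particular injective.

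The main obstacle I expect is precisely the interchange of base change with the infinite (completed) direct product defining $\Udot^{\wedge}$: one must be careful that ${}_R\Udot^{\wedge}$ as defined equals the componentwise $R\otimes_\mA$ of ${}_\mA\Udot^{\wedge}$, and that the embedding $\imath$ commutes with this identification. This is where the finiteness constraint ``$\lambda-\lambda'\in G$'' in the definition of the completion is essential — it confines each element to finitely many $X_\imath$-cosets, so on the target one is really dealing with products over $\lambda$ within a fixed coset, each factor free over $\mA$, and $R\otimes_\mA$ of such a product is the product of the base changes. Once that bookkeeping is in place, the existence of an $\mA$-linear retraction (equivalently, that $\imath({}_\mA\Udot^{\imath})$ is a direct summand of ${}_\mA\Udot^{\wedge}$ as an $\mA$-module), which follows from the freeness in Proposition~\ref{prop:idiv}(1) and the leading-term behaviour of the $\imath$-canonical basis, finishes the argument.
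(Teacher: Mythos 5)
Your proposal is essentially the paper's argument: the paper likewise works from the freeness of ${}_\mA\Uidot$ and the leading-term behaviour of the $\imath$-canonical basis to conclude that the relevant transition matrix is unitriangular with unit leading coefficients, hence stays injective after $\otimes_\mA R$. One simplification over your write-up is worth noting: the paper fixes a single $\lambda$ with $\overline{\lambda}=\zeta$ and invokes \cite[Theorem~7.2]{BW18c} to write $b_1\diamondsuit^\imath_\zeta b_2\, 1_\lambda = b_1\diamondsuit_\lambda b_2\, 1_\lambda + (\text{lower terms})$ in Lusztig's canonical basis of the single component ${}_\mA\Udot 1_\lambda$, so the whole argument (and the retraction you want) lives inside one free $\mA$-module $\Udot 1_\lambda$; one never needs to split $\imath$ into the completed ${}_\mA\Udot^{\wedge}$, and the interchange of base change with the infinite product that you flagged as the main obstacle is avoidable rather than essential.
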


\begin{proof}
Recall the canonical basis $\dot{\B}^\imath$ of ${}_\mA \Uidot$ from \cite[Theorem~7.2]{BW18c}. For any $b_1\diamondsuit^\imath_\zeta b_2 \in \dot{\B}^\imath$ and $\lambda \in X$ such that $\overline{\lambda} = \zeta \in X_\imath$, we have 
\[
	b_1\diamondsuit^\imath_\zeta b_2 1_\lambda = b_1\diamondsuit_\lambda b_2 1_\lambda + \text{lower terms}.
\]
Here $b_1\diamondsuit_\lambda b_2$ denotes Lusztig\rq{}s canonical basis element (\cite[Theorem~25.2.1]{Lu}) on ${}_\mA \Udot$. The proposition follows, since the coefficient of the leading term is $1$. 
\end{proof}


\section{The $\imath$quantum group $\U^{*,\imath}$}\label{sec:3}
In this section, we define the $\imath$quantum group associated with the root datum $(X^*, Y^*,\dots)$ and the pair $(\Iblack, \tau)$. 

\subsection{Root data} 
Recall the root datum $(X^*, Y^*, \dots)$ in \S\ref{subsec:stardatum}.   Note that since $\tau$ is an involution of the Cartan datum $(\I, \cdot)$, it is naturally an involution the Cartan datum $(\I, \circ)$. The involution $\tau$ restricts to an involution of $X^* \subset X$. The involution $\tau$ extends naturally on $Y^* = \text{Hom}(X^*, \mathbb{Z})$, such that the perfect pairing $\langle \cdot, \cdot \rangle^*$ is $\tau$-invariant. Thanks to Lemma~\ref{lem:sametype}, the pair $(\Iblack, \tau)$ is admissible with respect to the root datum $(X^*, Y^*, \dots)$.

Recall the definition of $X_\imath$ and $Y^\imath$ in \eqref{XY}. We similarly define $X^*_\imath$ and $Y^{*,\imath}$. In particular, we have 
\[
X^*_{{\imath}} = X^* /  \breve{X}^*,  \quad \text{ where } \; \breve{X}^*  = \{ \la - \inv(\la) \vert \la \in X^*\},
\]

\begin{lem}
If $\lambda \in X^*$, then we have $\theta (\lambda) \in X^*$.
\end{lem}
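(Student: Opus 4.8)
The plan is to verify directly from the definitions that $\theta$ preserves the sublattice $X^*$. Recall that $\theta = -w_\bullet \circ \tau$ is an involution of $X$, and that $X^* = \{\lambda \in X \mid \langle i, \lambda \rangle \in l_i \Z \text{ for all } i \in \I\}$. So given $\lambda \in X^*$, I must show $\langle i, \theta(\lambda)\rangle \in l_i \Z$ for every $i \in \I$. The idea is to move $\theta$ off of $\lambda$ and onto $i$: since the perfect pairing $\langle \cdot, \cdot\rangle$ is $\tau$-invariant (by hypothesis) and $w_\bullet$ acts on both $X$ and $Y$ compatibly with the pairing (being an element of the Weyl group), the involution $\theta$ on $X$ is adjoint to the corresponding involution $\theta$ on $Y$; that is, $\langle i, \theta(\lambda)\rangle = \langle \theta(i), \lambda\rangle$ where on the left $i$ is viewed in $Y$ and $\theta(i)$ denotes $(-w_\bullet \tau)(i) \in Y$.

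Next I would analyze $\theta(i)$ for $i \in \I$. There are two cases. If $i \in \Iblack$, then $\tau(i) \in \Iblack$ (since $(\Iblack, \tau)$ is admissible) and $-w_\bullet$ permutes the simple coroots in $\Iblack$ up to sign — in fact by admissibility condition (2) the action of $\tau$ on $\Iblack$ coincides with that of $-w_\bullet$, so $\theta(i) = -w_\bullet\tau(i)$ is again (plus or minus) a simple coroot, or more carefully one checks $\theta(i) \in \Z_{\ge 0}$-span of $\Phi^\vee_\bullet$; the relevant point is that $\langle \theta(i), \lambda\rangle$ will be an integer combination of the $\langle j, \lambda\rangle$ for $j \in \Iblack$, each divisible by $l_j$, and since all $l_j$ are equal (by the remark following Lemma~\ref{lem:sametype}, $l = l_i$ for all $i$) this combination lies in $l\Z = l_i\Z$. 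If $i \in \Iwhite$, then $\theta(i) = -w_\bullet\tau(i)$, and writing $\tau(i) = i^{\flat}$ for the simple coroot $\tau(i)$ and $-w_\bullet(i^\flat) = i^\flat + (\text{sum of simple coroots in } \Iblack)$, we again get $\langle \theta(i),\lambda\rangle = \langle \tau(i), \lambda\rangle + (\text{integer combination of } \langle j,\lambda\rangle, j \in \Iblack)$; the first term lies in $l_{\tau(i)}\Z = l_i\Z$ (using $\lambda \in X^*$ and $l_{\tau(i)} = l_i$), and the remaining terms lie in $l\Z = l_i\Z$.

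The cleanest way to package this uniformly is: for any $i \in \I$, expand $-w_\bullet\tau(i)$ in the basis of simple coroots, say $-w_\bullet\tau(i) = \sum_{k \in \I} c_{ik}\, k$ with $c_{ik} \in \Z$; then $\langle i, \theta(\lambda)\rangle = \sum_k c_{ik}\langle k, \lambda\rangle \in \sum_k c_{ik} l_k \Z = \sum_k c_{ik} l \Z \subseteq l\Z = l_i\Z$, using once more that $l_k = l$ for all $k$. This makes the argument essentially a one-line computation once the adjunction $\langle i, \theta(\lambda)\rangle = \langle \theta(i),\lambda\rangle$ is in hand.

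The main obstacle — really the only point requiring care — is justifying that adjunction, i.e. that the involution $\theta$ on $X$ is the transpose of $\theta$ on $Y$ with respect to the perfect pairing. For $\tau$ this is part of the standing hypothesis (``the perfect bilinear pairing is invariant under $\tau$''). For $w_\bullet$ one uses the standard fact that the action of the Weyl group on $X$ and on $Y$ satisfies $\langle \mu, w\lambda\rangle = \langle w^{-1}\mu, \lambda\rangle$, together with $w_\bullet^{-1} = w_\bullet$; composing gives the claim for $\theta = -w_\bullet\tau$. I would state this adjunction explicitly (perhaps citing \cite[\S2.2]{Lu} for the Weyl group action on root data) and then the divisibility computation is immediate.
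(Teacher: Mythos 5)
Your proof is correct. The paper's own argument is shorter and more structural: it observes that $X^*$ is $\tau$-invariant (since $\tau$ is an automorphism of the Cartan datum, so $l_{\tau(i)}=l_i$) and is invariant under the Weyl group (which is where Lemma~\ref{lem:sametype}, i.e.\ the uniformity $l_i=l$ for all $i$, enters), and then concludes by composing, since $\theta=-w_\bullet\circ\tau$. Your version instead transposes $\theta$ through the perfect pairing to get $\langle i,\theta(\lambda)\rangle=\langle\theta(i),\lambda\rangle$, expands $\theta(i)$ as an integer combination of simple coroots, and finishes with the same uniformity $l_k=l$. The two proofs rest on the same two facts ($\tau$-invariance of the pairing and $l_i=l$ from Lemma~\ref{lem:sametype}), but your route is a single explicit divisibility computation in $Y$, whereas the paper factors the invariance of $X^*$ through $\tau$ and $W$ separately; yours is more self-contained and makes the role of the adjointness of $\theta$ on $X$ and $Y$ visible, at the cost of a slightly longer write-up. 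One small point worth making explicit if you keep the adjunction route: the identity $\langle\mu,\theta(\lambda)\rangle=\langle\theta(\mu),\lambda\rangle$ uses not only $\tau$-invariance of the pairing and $w_\bullet^{-1}=w_\bullet$ but also that $\tau$ and $w_\bullet$ commute, which holds because $\tau(\Iblack)=\Iblack$ forces $\tau w_\bullet\tau^{-1}=w_\bullet$.
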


\begin{proof}
Note that $X^*$ is $\tau$-invariant by our assumption on $\tau$. The sublattice $X^*$ is invariant under the Weyl group action as well, thanks to Lemma~\ref{lem:sametype}.  Therefore $X^*$ is invariant under $\theta$. 
\end{proof}

So we have $\breve{X}^* \subset X^* \cap \breve{X}$ and the following commutative diagram: 
\[
\xymatrix{ X^* \ar[r] \ar[d] & X \ar[d] \\
X^*_\imath \ar[r]& X_\imath}
\]

\begin{lem}
If the root datum $(Y,X, \dots )$ is simply connected,  then we have  $\breve{X}^* = X^* \cap \breve{X}$.
\end{lem}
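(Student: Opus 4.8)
The plan is to reduce the identity $\breve{X}^* = X^* \cap \breve{X}$ to a statement about torsion in the $\imath$-weight lattice $X_\imath = X/\breve{X}$, and then to exploit that $l$ is odd.

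First I would make $X^*$ explicit. Simple-connectedness of $(Y,X,\dots)$ provides a $\Z$-basis $\{\varpi_i\}_{i\in\I}$ of $X$ dual to the simple coroots, i.e.\ $\langle j,\varpi_i\rangle=\delta_{ij}$. Together with the equality $l_i=l$ for all $i\in\I$ recorded after \lemref{lem:sametype}, the defining condition for $X^*$ becomes $\langle i,\lambda\rangle\in l\Z$ for all $i$, so that $X^*=lX$. Since $\theta$ is $\Z$-linear (and preserves $X^*$, as shown just above), this yields $\breve{X}^*=\{\lambda-\theta(\lambda)\mid\lambda\in lX\}=l\{\mu-\theta(\mu)\mid\mu\in X\}=l\breve{X}$. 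Hence, granting the already-noted inclusion $\breve{X}^*\subseteq X^*\cap\breve{X}$, it remains to prove $lX\cap\breve{X}\subseteq l\breve{X}$; equivalently, that $ly\in\breve{X}$ with $y\in X$ forces $y\in\breve{X}$, i.e.\ that $X_\imath$ has no $l$-torsion.

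The second, main step is to show that the torsion subgroup of $X_\imath$ is always annihilated by $2$ (for any root datum, independently of simple-connectedness). Write $X^-=\{\mu\in X\mid\theta(\mu)=-\mu\}$. Applying $\theta$ to $\mu-\theta(\mu)$ shows $\breve{X}\subseteq X^-$, while $(1-\theta)\mu=2\mu$ for $\mu\in X^-$ shows $2X^-\subseteq\breve{X}$; so $X^-/\breve{X}$ is killed by $2$. Conversely, if $ny\in\breve{X}\subseteq X^-$ for some $n\ge 1$ and $y\in X$, then $n(y+\theta(y))=0$ in the torsion-free lattice $X$, so $y\in X^-$. Thus the torsion subgroup of $X_\imath$ is exactly the image of $X^-$, which is annihilated by $2$. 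Since $l$ is odd, $X_\imath$ has no $l$-torsion, so $lX\cap\breve X\subseteq l\breve X$ and the lemma follows.

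I do not anticipate a genuine obstacle: all steps are short and formal. The only place requiring slight care is the identification $X^*=lX$ — for a general root datum $X^*$ is strictly larger than $lX$, and it is precisely simple-connectedness (via the dual basis $\{\varpi_i\}$) together with $l_i=l$ that forces equality, which is also the reason the hypothesis is needed — together with the use of the preceding lemma to know $\theta(X^*)\subseteq X^*$.
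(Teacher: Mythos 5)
Your proof is correct, and it takes a genuinely different route from the paper's. The paper works in coordinates: it expands $\lambda=\sum_i a_i\omega_i$ in the fundamental-weight basis, uses an explicit description of $\theta$ on the $\omega_i$ to write $\lambda-\theta(\lambda)=\sum_{i\in\Iwhite}(a_i+a_{\tau i})\omega_i$, and then constructs a lift $\mu\in X^*$ by adjusting the coefficients $a_i$ by integers $b_i$ chosen in a $\tau$-symmetric way modulo $l$ (here the oddness of $l$ enters to handle the $\tau$-fixed indices). Your argument instead isolates two clean structural facts: simple-connectedness together with $l_i=l$ gives the identification $X^*=lX$, hence $\breve X^*=l\breve X$, reducing the lemma to the assertion that $X_\imath=X/\breve X$ has no $l$-torsion; and the torsion of $X_\imath$ is always $2$-torsion, since the torsion subgroup is exactly the image of the $(-1)$-eigenlattice $X^-$, sandwiched via $2X^-\subseteq\breve X\subseteq X^-$. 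What your route buys is a separation of concerns: the only place simple-connectedness is used is the one-line identification $X^*=lX$, while the torsion statement about $X_\imath$ is a general fact about any lattice with involution (hence potentially reusable elsewhere), and the role of $l$ being odd becomes transparent ($\gcd(l,2)=1$). Your argument also sidesteps the explicit description of $\theta$ on the fundamental weights that the paper's proof invokes, replacing it with the linearity of $\theta$ alone. The paper's approach, by contrast, produces the lift $\mu$ explicitly, which could be useful if one later needs effective control on the preimage.
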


\begin{proof}
We need to show that if $\langle i, \lambda -\theta(\lambda) \rangle \in l\mathbb{Z}$, then $\lambda -\theta(\lambda) = \mu -\theta(\mu)$ for some $\mu \in X^*$. 

We write $X = \oplus_{i \in \I}\mathbb{Z} \omega_{i}$, where $\omega_i$ denotes the $i$-th fundamental weight. Let 
\[
	\lambda = \sum_{i \in \I } a_i \omega_i.
\]
Note that since 
\[
\theta(\omega_i) = 
	\begin{cases}
		-\omega_{\tau i}, &\text{if } i \in \Iwhite;\\
		\omega_{i}, &\text{if } i \in \Iblack,
	\end{cases}
\]
we have 
\[
	\lambda - \theta(\lambda) = \sum_{i \in \Iwhite} (a_i + a_{\tau i}) \omega_i.
\]

Since $\langle i, \lambda -\theta(\lambda) \rangle \in l\mathbb{Z}$, we have
\[
	  a_i \equiv - a_{\tau i},   \mod l .
\]
Since $l$ is odd, then we must have 
\[
 a_i \equiv  a_{\tau i} \equiv 0,  \mod l, \text{ if }  i = \tau i.
 \]
 Therefore by considering $\tau$-orbits in $\Iwhite$, we can find $1-l \le b_i \le l-1$ for each $i \in \Iwhite$ such that 
 \[
 b_i + b_{\tau i} =0, \quad \text{ and } \quad b_i \equiv a_i   \mod l.
 \]
 
Now we can simply take 
\[
	 \mu = \sum_{i \in \Iwhite} a_i \omega_i - \sum_{i \in \Iwhite} b_i \omega_i.
\]

Then since $\langle i, \mu \rangle \in l\mathbb{Z}$ for all $i \in \I$, we have $\mu \in X^*$. Since $b_i + b_{\tau i} = 0$ for all $i \in \Iwhite$ by definition, we have 
\[
	\mu - \theta(\mu) = \sum_{i \in \Iwhite} (a_i + a_{\tau i } + b_i + b_{\tau i}) \omega_i = \lambda - \theta(\lambda).
\]
The lemma follows.
\end{proof}

{\bf For the rest of paper, we assume  $\breve{X}^* = X^* \cap \breve{X}$.} Thanks to the previous lemma, the equality holds when the root datum $(Y,X, \dots )$ is simply connected. 
\begin{lem}
Let $\lambda \in X$ such that $\overline{\lambda} \in X^*_\imath$, then $\lambda \in X^*$.
\end{lem}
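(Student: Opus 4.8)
The statement to prove is: if $\lambda \in X$ satisfies $\overline{\lambda} \in X^*_\imath$, then $\lambda \in X^*$. Unpacking the hypothesis via the commutative diagram just above: the class $\overline{\lambda} \in X_\imath$ lies in the image of $X^*_\imath \to X_\imath$, so there exists $\mu \in X^*$ with $\overline{\mu} = \overline{\lambda}$ in $X_\imath$, i.e. $\lambda - \mu \in \breve{X} = \{\nu - \theta(\nu) \mid \nu \in X\}$. The goal is to deduce $\lambda \in X^*$. Since $\mu \in X^*$ already, it suffices to show $\lambda - \mu \in X^*$, i.e. that $\lambda - \mu \in X^* \cap \breve{X}$.

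The plan is to invoke the standing assumption (boxed just before the statement) that $\breve{X}^* = X^* \cap \breve{X}$. Write $\lambda - \mu = \nu - \theta(\nu)$ for some $\nu \in X$; a priori this element lies in $\breve{X}$ but we must check it lies in $X^*$. Here I would argue as follows: $\lambda - \mu \in X^* \cap \breve{X}$ would be immediate if we knew $\lambda - \mu \in X^*$, which is exactly what we want, so this is circular — instead I need to use the defining property of $X^*_\imath$ more carefully. The cleaner route: the hypothesis $\overline{\lambda} \in X^*_\imath$ literally means that the image of $\lambda$ under $X \to X_\imath$ comes from $X^*_\imath = X^*/\breve{X}^*$; chasing the diagram, pick $\mu \in X^*$ mapping to the same class in $X_\imath$, so $\lambda - \mu \in \breve{X}$. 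Then I want $\lambda - \mu \in \breve{X}^* \subseteq X^*$, and for this I use $\breve X^* = X^* \cap \breve X$: it remains only to check $\lambda - \mu \in X^*$. So I still need an independent handle. The right one is: $\lambda - \mu \in \breve{X}$ is automatically $\theta$-anti-invariant in the sense $\theta(\lambda - \mu) = -(\lambda-\mu)$ only after projecting; but concretely $\nu - \theta(\nu)$ for $\nu \in X$ — use the explicit description $\theta(\omega_i) = -\omega_{\tau i}$ or $\omega_i$ from the previous lemma's proof (in the simply connected case, or argue with a general root datum) to see that $\langle i, \nu - \theta(\nu)\rangle = \langle i, \nu\rangle + \langle \tau i, \nu\rangle$ for $i \in \Iwhite$ and $=0$ for $i \in \Iblack$ — wait, that's the same computation as before. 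The key point I would extract is that membership in $X^*$ only constrains pairings $\langle i, -\rangle$ for $i\in\I$ to lie in $l_i\Z = l\Z$, and since $\mu \in X^*$ and $\lambda \equiv \mu$ modulo $\breve X$, the difference $\lambda - \mu$ satisfies $\langle i, \lambda-\mu\rangle \in l\Z$ precisely because $\overline{\lambda}$ lands in $X^*_\imath$ forces the same congruences that defined $X^*$ — this is where the hypothesis does its work.

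Concretely: since $\overline{\lambda} \in X^*_\imath$ and $X^*_\imath = X^*/\breve X^*$, lift to $\mu \in X^*$ with $\lambda - \mu \in \breve X$; write $\lambda - \mu = \nu - \theta(\nu)$. For each $i \in \I$ one computes $\langle i, \nu - \theta(\nu)\rangle$ using $\theta = -w_\bullet \circ \tau$: for $i \in \Iblack$ this is $\langle i, \nu\rangle - \langle i, \theta(\nu)\rangle$ and one checks it lies in $l\Z$ using admissibility; for $i \in \Iwhite$ one gets a sum of pairings that must be shown to lie in $l\Z$. The cleanest formulation is to note $\langle i, \lambda\rangle = \langle i, \mu\rangle + \langle i, \lambda - \mu\rangle$ with $\langle i,\mu\rangle \in l\Z$, and then show $\langle i, \lambda - \mu \rangle \in l\Z$ directly — but this is equivalent to $\langle i,\lambda\rangle \in l\Z$, i.e. $\lambda\in X^*$, so I must instead derive $\langle i, \lambda\rangle \in l\Z$ from $\overline\lambda \in X^*_\imath$ by a symmetrization argument: the class $\overline\lambda$ is represented by some element of $X^*$, and the map $X^* \to X_\imath$ being well-defined means two such representatives differ by $\breve X^* \subseteq \breve X$. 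I expect the main obstacle to be exactly this last point — disentangling the circularity by correctly using that $\lambda$ and $\mu \in X^*$ have the same image in $X_\imath$ together with the hypothesis $\breve X^* = X^* \cap \breve X$ to conclude $\lambda - \mu \in \breve X^*$, hence $\lambda \in \mu + \breve X^* \subseteq X^*$. Once that logical structure is set up, the computation of $\theta$ on fundamental weights (already done in the preceding lemma) supplies the remaining details with essentially no new work.
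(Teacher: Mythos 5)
Your instinct that the argument is circular is correct, and you have identified a genuine gap --- one shared by the paper's own proof. The paper writes $\lambda = \mu + \nu - \theta(\nu)$ with $\mu, \nu \in X^*$ and then concludes ``obviously.'' But the hypothesis $\overline{\lambda} \in X^*_\imath$ only produces some $\mu \in X^*$ with $\lambda - \mu \in \breve{X}$, i.e.\ $\nu \in X$, not $\nu \in X^*$; asserting $\nu \in X^*$ is precisely what needs to be proven. Your proposed fix --- using $\breve{X}^* = X^* \cap \breve{X}$ to push $\lambda - \mu$ from $\breve{X}$ into $\breve{X}^*$ --- is circular in exactly the way you suspect: that step already requires $\lambda - \mu \in X^*$, which is equivalent to the conclusion. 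The ``computation of $\theta$ on fundamental weights'' you defer to cannot close the loop, because the statement is in fact false. Take the simply connected root datum of type $A_1$ with $\Iblack = \emptyset$ and $\tau = \id$, so $\theta = -\id$ and $X = \Z\omega_1$: then $X^* = l\Z\omega_1$, $\breve{X} = 2\Z\omega_1$, $\breve{X}^* = 2l\Z\omega_1 = X^* \cap \breve{X}$, and since $l$ is odd the map $X^*_\imath \cong \Z/2\Z \to X_\imath \cong \Z/2\Z$ is an isomorphism. Hence every $\lambda \in X$ has $\overline{\lambda} \in X^*_\imath$, yet $\omega_1 \notin X^*$ for $l>1$.

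You should not worry unduly: tracing how this lemma is used, it is not load-bearing. In the Frobenius computations of Section 4 one sums $\Fr(\,\cdot\,1_\lambda)$ over all $\lambda \in X$ with $\overline{\lambda} = \zeta$, and $\Fr$ already kills $1_\lambda$ whenever $\lambda \notin X^*$, so only the $\lambda \in X^*$ survive regardless of this lemma. The honest conclusion is that the lemma should be dropped (or reformulated as ``if $\zeta \in X^*_\imath$ we only consider representatives $\lambda \in X^*$''), and you were right to distrust the inference from $\lambda - \mu \in \breve{X}$ to $\lambda - \mu \in \breve{X}^*$.
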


\begin{proof}
We have 
\[
\lambda = \mu+ \nu -\theta(\nu), \text{ with } \mu, \nu \in X^*.
\]
Then obviously we have $\lambda \in X^*$. 
\end{proof}

\subsection{The $\imath$quantum group $\U^{*,\imath}$}
We now define the $\imath$quantum group associated with the root datum $(X^*, Y^*,\dots)$ and the pair $(\Iblack, \tau)$.
\begin{definition}
The algebra $\U^{*,\imath}$, with parameters 
\begin{equation}
  \label{parameters}
\vs_{i}^* = \vs_i^{l^2} , \quad \kappa_i^* = \kappa_i^{l^2}, \qquad \text{ for }  i \in \I_{\circ},
\end{equation}
is the $\Qq$-subalgebra of $\U^*$ generated by the following elements:
\begin{align*}
F_{i}  &+ \vs_i^* \T_{w_{\bullet}} (E_{\tau i}) \widetilde{K}^{-1}_i 
+ \kappa^* _i \widetilde{K}^{-1}_{i} \, (i \in \I_{\circ}), 
 \\
& \quad K_{\mu} \,(\mu \in Y^{\imath,*}), \quad F_i \,(i \in \I_{\bullet}), \quad E_{i} \,(i \in \I_{\bullet}).
\end{align*}
\end{definition}

\begin{lem}
The parameters $\vs^*_i$ and $\kappa_i^*$ satisfy the following conditions: 
\begin{align*}
 \begin{split}
\kappa^*_i &=0 \; \text{ unless } \tau(i) =i, \langle i^*, j'{^*} \rangle^* = 0 \; \forall j \in \Iblack,
\\
&  
\qquad 
\text{ and } \langle k^*,i'^* \rangle^* \in 2\Z \; \forall k = \tau(k) \in \Iwhite \text{ such that } \langle k^*, j'^* \rangle^* = 0 \; \text{ for all } j \in \Iblack;
\end{split}
\\
%
%
\vs^*_{i} & =\vs^*_{{\tau i}} \text{ if }    i \circ \theta (i) =0;
\\
\vs^*_{{\tau i}} &=   (-1)^{ \langle 2(\rho_{\bullet}^*)^\vee,  i'^* \rangle^* } (v^*_i)^{-\langle i^*, 2\rho^*_{\bullet}+\wb\tau i'^* \rangle^*} \overline{\vs^*_{i} }. 
\end{align*}
\end{lem}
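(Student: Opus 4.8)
The plan is to verify each of the three conditions on $(\vs^*_i,\kappa^*_i)$ by reducing it, through the identification in Lemma~\ref{lem:sametype}, to the corresponding condition on $(\vs_i,\kappa_i)$ for the original root datum, which holds by Definition~\ref{def:Ui}. The key bookkeeping fact is that the pairing is preserved: Lemma~\ref{lem:sametype} gives $\langle i^*, j'^*\rangle^* = \langle i, j'\rangle$ for all $i,j\in\I$, and similarly the Cartan datum $(\I,\circ)$ has the same type as $(\I,\cdot)$, so that $i\circ\theta(i)=0$ if and only if $i\cdot\theta(i)=0$ (indeed $i\circ j = (i\cdot j)l_il_j$ and $l=l_i$ for all $i$ by the remark after Lemma~\ref{lem:sametype}). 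Thus the three displayed conditions, rewritten via these identities, become \emph{literally} the conditions \eqref{kappa}, \eqref{vs=}, \eqref{vs2} with $\kappa_i,\vs_i$ replaced by $\kappa_i^{l^2},\vs_i^{l^2}$.

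First I would handle \eqref{kappa}: since $\kappa_i^* = \kappa_i^{l^2}$, we have $\kappa_i^*=0$ exactly when $\kappa_i=0$, and the index-set conditions ($\tau(i)=i$; $\langle i,j'\rangle=0$ for $j\in\Iblack$; $\langle k,i'\rangle\in 2\Z$ for the relevant $k$) translate verbatim under $\langle\cdot,\cdot\rangle\leftrightarrow\langle\cdot,\cdot\rangle^*$ using Lemma~\ref{lem:sametype}. Next, \eqref{vs=}: $\vs_i^* = \vs_{\tau i}^*$ iff $\vs_i^{l^2} = \vs_{\tau i}^{l^2}$, and since $\vs_i,\vs_{\tau i}\in\pm v^\Z$ and $l$ is odd, raising to the $l^2$ power is injective on $\pm v^\Z$ (the sign is preserved because $l^2$ is odd, and the exponent is multiplied by $l^2\neq 0$); so this is equivalent to $\vs_i=\vs_{\tau i}$, which holds under the hypothesis $i\cdot\theta(i)=0$, equivalently $i\circ\theta(i)=0$.

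The last condition \eqref{vs2} is the one requiring the most care, and is the main obstacle. Raising the identity $\vs_{\tau i} = (-1)^{\langle 2\rho^\vee_\bullet, i'\rangle} v_i^{-\langle i, 2\rho_\bullet + \wb\tau i'\rangle}\,\overline{\vs_i}$ to the power $l^2$ gives
\begin{equation*}
\vs_{\tau i}^{l^2} = (-1)^{l^2\langle 2\rho^\vee_\bullet, i'\rangle}\, v_i^{-l^2\langle i, 2\rho_\bullet + \wb\tau i'\rangle}\,\overline{\vs_i^{l^2}},
\end{equation*}
and I must match this against the asserted
\begin{equation*}
\vs^*_{\tau i} = (-1)^{\langle 2(\rho^*_\bullet)^\vee, i'^*\rangle^*}\,(v^*_i)^{-\langle i^*, 2\rho^*_\bullet + \wb\tau i'^*\rangle^*}\,\overline{\vs^*_i}.
\end{equation*}
Since $l^2$ is odd, the sign $(-1)^{l^2 m} = (-1)^m$, so I need $\langle 2(\rho^*_\bullet)^\vee, i'^*\rangle^* \equiv \langle 2\rho^\vee_\bullet, i'\rangle \pmod 2$; since $\Iblack$ and the sub-root-datum are unchanged in passing to $(\I,\circ)$ and $i'^* = l_i i'$, this follows from Lemma~\ref{lem:sametype} applied within the parabolic. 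For the power of $v$, I need $(v^*_i)^{\langle i^*, 2\rho^*_\bullet + \wb\tau i'^*\rangle^*} = v_i^{l^2\langle i, 2\rho_\bullet + \wb\tau i'\rangle}$; here $v^*_i = v_i^{(i\circ i)/2} = v_i^{l^2(i\cdot i)/2}\cdot(\text{correction})$—more precisely $v_i^* = v_i^{(i\circ i)/2}$ and $(i\circ i)/2 = l^2(i\cdot i)/2$ since $l_i=l$, while the pairing $\langle i^*, \cdot\rangle^*$ applied to $\rho^*_\bullet$ and $i'^*$ reproduces $\langle i,\cdot\rangle$ times the appropriate power of $l$ via the definition $i'^* = l_i i'$ and $\langle i^*,\zeta\rangle^* = \langle i,\zeta\rangle/l_i$. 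I would carefully combine these $l$-factors to check the total exponent of $v_i$ agrees on both sides; the finite-type hypothesis and $l=l_i$ for all $i$ (the remark after Lemma~\ref{lem:sametype}) are exactly what makes the exponents match. Finally $\overline{\vs^*_i} = \overline{\vs_i^{l^2}}$ is immediate from $\vs^*_i = \vs_i^{l^2}$, so the identity holds and the lemma follows.
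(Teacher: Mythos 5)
Your proof follows the paper's own (one-line) argument: the paper simply invokes Lemma~\ref{lem:sametype} together with the identity $v_i^* = v_i^{l^2}$, and you have spelled out the three resulting verifications along exactly those lines (noting $l^2$ odd for the sign and $\langle i^*,j'^*\rangle^* = \langle i,j'\rangle$ for the pairings). The murkiness you sensed around $v_i^*$ (your ``(correction)'' placeholder) traces to an apparent typo in the paper's definition, which should read $v_i^* := v^{(i\circ i)/2}$ rather than $v_i^{(i\circ i)/2}$; with that reading $v_i^* = v^{l^2(i\cdot i)/2} = v_i^{l^2}$ cleanly, and the exponent match in the third condition is immediate with no further $l$-bookkeeping.
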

\begin{proof}
The lemma follows directly from Lemma~\ref{lem:sametype}, and the fact that $v_i^* = v_i^{l^2}$.
\end{proof}

So $(\U^*, \U^{\imath, *})$ is also a quantum symmetric pair as defined in Definition~\ref{def:Ui}. Hence results from \cite{BW18b,BW18c} applies.
Therefore, we can similarly define the following as before ($R$ is any commutative $\mA$-algebra)
\[
{}_\mA\Udot^{*,\imath}, {}_R\Udot^{*,\imath}, \Udot^{*,\imath,\wedge},  {}_\mA\Udot^{*,\imath,\wedge}, {}_R\Udot^{*,\imath,\wedge}.
\]
In particular, we have the following counterpart of Proposition~\ref{prop:emd}:
\begin{equation}\label{eq:emb*}
\imath: {}_R \Udot^{*,\imath,\wedge} \lhook\joinrel\longrightarrow  {}_R \Udot^{*,\wedge}.
\end{equation}

\begin{rem}\label{rem:even}
If we take $l$ to be even, then the pair $(\Iblack, \tau)$ may not be admissible with respect to the root datum $(X^*, Y^*,\dots)$. Let us illustrate this phenomenon in the following example. 

Let $(\I = \{\alpha_1,\alpha_2\}, \cdot)$ be the Cartan datum of type $B_2$, where $\alpha_2$ denotes the short root. Let $(X, Y, \dots)$ be any root datum of type $(\I, \cdot)$. We take $\Iblack = \{\alpha_2\}$ and $\tau = \id$. It follows that $(\Iblack,\tau)$ is admissible, which one can see from the Satake diagram. Let $l = 4$. Then $(\I = \{\alpha_1,\alpha_2\}, \circ)$ is actually of type $C_2$ with short root $\alpha_1$. It is easy to see that $(\Iblack,\tau)$ is not admissible anymore, which one can again observe from the Satake diagram. 
\end{rem}
%
%
 
\section{Quantum Frobenius homomorphism}\label{sec:4}
Let $\mA\rq{}$ be the quotient of $\mA$ by the two-sided ideal generated by the $l$-th cyclotomic polynomial $f_{l} \in \mA$ through out this section. (Recall that $(f_1, f_2, f_3, \dots) = (v-1, v+1, v^2+v+1, \dots)$). We denote by $\phi : \mA \rightarrow \mA\rq{}$ the quotient map. 

Recall the Frobenius morphism in Theorem~\ref{thm:LuFr}. Thanks to the embeddings \eqref{eq:emb} and \eqref{eq:emb*}, we have
\[
\xymatrix{ {}_{\mA\rq{}}\Uidot  \ar[r]^-\imath  & {}_{\mA\rq{}}  {\Udot}^\wedge \ar[d]^-{\bold{Fr}}\\
	{}_{\mA\rq{}}\Udot^{*,\imath} \ar[r]^-\imath &  {}_{\mA\rq{}}  {\Udot}^{*,\wedge} }
.\]

Recall the parameters for the algebra $\Ui$ in \eqref{parameters}. {\bf For the rest of the paper, we shall only consider the case when the parameter $\kappa_i =0$.} We remark that this restriction is only relevant to the computation in \S\ref{subsec:AI1}. The goal of this section is to establish the following theorem:

\begin{thm}\label{thm:main}
	The quantum Frobenius morphism restricts to an $\mA\rq{}$-algebra homomorphism 
	\[
			\Fr: {}_{\mA\rq{}}\Uidot  \longrightarrow {}_{\mA\rq{}}\Udot^{*,\imath} .
	\]
\end{thm}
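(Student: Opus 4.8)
The strategy is to reduce the statement to a computation on the $\imath$-divided power generators. By Proposition~\ref{prop:idiv}, the algebra ${}_{\mA'}\Uidot$ is generated over $\mA'$ by the elements $B^{(a)}_{i,\zeta}$ for $i \in \I$, $\zeta \in X_\imath$, $a \geq 0$, together with $E^{(a)}_j 1_\zeta$ for $j \in \Iblack$, $\zeta \in X_\imath$, $a \geq 0$. Since $\bold{Fr}$ is already an algebra homomorphism on the ambient completion ${}_{\mA'}\Udot^\wedge$ (Theorem~\ref{thm:LuFr}), it suffices to show that $\bold{Fr} \circ \imath$ carries each of these generators into the image of $\imath \colon {}_{\mA'}\Udot^{*,\imath} \to {}_{\mA'}\Udot^{*,\wedge}$, and moreover into the image of ${}_{\mA'}\Udot^{*,\imath}$ itself (not merely ${}_{\mA'}\Udot^{*,\imath,\wedge}$; but by \eqref{eq:emb*} the latter injects into ${}_{\mA'}\Udot^{*,\wedge}$, so identifying the element inside ${}_{\mA'}\Udot^{*,\imath,\wedge}$ suffices once we check it actually lies in the uncompleted algebra). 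The upshot is a commuting square: once the generators map correctly, the $\mA'$-algebra map $\Fr$ on ${}_{\mA'}\Uidot$ is uniquely determined and well-defined.

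\textbf{Key steps, in order.} First, handle the generators $E^{(a)}_j 1_\zeta$ and $F^{(a)}_j 1_\zeta$ with $j \in \Iblack$: these are ordinary divided powers inside the Levi part, and Lusztig's formula in Theorem~\ref{thm:LuFr} sends them to $E^{(a/l)}_j 1_\zeta$ (resp. $0$) which manifestly lies in $\Udot^{*,\imath}$ since $E_j, F_j$ for $j \in \Iblack$ are among its generators and $\zeta \in X^*_\imath$ whenever $\overline\lambda$ is; this is immediate. Second, handle $1_\zeta$ and the toral-type factors $K_\mu 1_\zeta$ for $\mu \in Y^\imath$: under $\imath$ these correspond to sums $\sum_{\overline\lambda = \zeta} 1_\lambda$, and $\bold{Fr}$ kills $1_\lambda$ for $\lambda \notin X^*$ while preserving it for $\lambda \in X^*$, so the image is $\sum_{\overline\lambda = \zeta,\, \lambda \in X^*} 1_\lambda$, which by the lemmas on $\breve{X}^* = X^* \cap \breve{X}$ is exactly the image of $1_\zeta \in \Udot^{*,\imath}$ under $\imath$ — this is where the root-datum compatibility established in Section~\ref{sec:3} gets used. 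Third, and this is the crux, compute $\bold{Fr}(\imath(B^{(a)}_{i,\zeta}))$ for $i \in \Iwhite$. Here one expands the $\imath$-divided power $B^{(a)}_{i,\zeta}$ (with $\kappa_i = 0$, as we have assumed) using its explicit closed formula from \cite[Theorem~5.1]{BW18c} — a polynomial in $B_{i,\zeta} = F_i + \vs_i \T_{w_\bullet}(E_{\tau i})\tK_i^{-1}$ with coefficients in $\Q(v)$ built from quantum binomial/factorial expressions — then applies $\bold{Fr}$ term by term, using that $\bold{Fr}$ is multiplicative and that it sends $v$-scalars to their images in $\mA'$. One expects that $\bold{Fr}(\imath(B^{(a)}_{i,\zeta}))$ equals $\imath\big(B^{(a/l)}_{i,\zeta}\big)^*$ (the $\imath$-divided power for the starred datum, with parameter $\vs_i^* = \vs_i^{l^2}$) when $l \mid a$, and vanishes otherwise — mirroring the classical Frobenius on $E^{(n)}, F^{(n)}$. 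Verifying this requires carefully tracking how the quantum integers $[n]_i$, $[n]_{v_i^2}$ etc. specialize at the $l$-th root of unity, in particular that $[nl]_i$ specializes compatibly with $[n]_i^*$ up to the relevant sign and power of $v$, and that $\vs_i$ acquires the $l^2$-power exactly as needed. The case analysis for $\I_1$-type rank-one situations referenced in \S\ref{subsec:AI1} is where the sign $\vs_i \in \pm v^\Z$ (rather than $\Z[v,v^{-1}]$) and the evenness/oddness bookkeeping of the $\imath$-divided powers matter.

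\textbf{Main obstacle.} The genuinely hard part is the explicit evaluation of $\bold{Fr}$ on the $\imath$-divided powers $B^{(a)}_{i,\zeta}$ for $i \in \Iwhite$. Unlike ordinary divided powers, the $\imath$-divided powers have a bifurcated definition depending on parities of $a$ and of $\langle i, \zeta\rangle$, and are given by products of terms like $(B_{i,\zeta}^2 - c_j)$ or $B_{i,\zeta}(B_{i,\zeta}^2 - c_j)$ with $v$-dependent constants $c_j$; pushing these through $\bold{Fr}$ and recognizing the result as the starred $\imath$-divided power is a delicate root-of-unity computation, and it must be done separately in the two parity regimes. This is presumably the content of \S\ref{subsec:AI1}. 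A secondary subtlety is ensuring the image lands in the \emph{uncompleted} algebra ${}_{\mA'}\Udot^{*,\imath}$ and that the resulting assignment on generators respects all defining relations of ${}_{\mA'}\Uidot$ — but the latter is automatic because we realize $\Fr$ as the restriction of the honest algebra homomorphism $\bold{Fr}$ along the embeddings \eqref{eq:emb} and \eqref{eq:emb*}, so no relation-checking is needed once the generators are shown to map into the subalgebra.
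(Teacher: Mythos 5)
Your overall reduction is the same as the paper's: identify ${}_{\mA'}\Uidot$ with its image in ${}_{\mA'}\Udot^\wedge$ via $\imath$, realize $\Fr$ as the restriction of Lusztig's $\bold{Fr}$, and then reduce via Proposition~\ref{prop:idiv} to checking that the generators $E^{(a)}_j 1_\zeta$ ($j\in\Iblack$, trivial) and $B^{(a)}_{i,\zeta}$ ($i\in\Iwhite$, the crux) land in $\imath({}_{\mA'}\Udot^{*,\imath})$, with the case analysis governed by the real-rank-one Satake type. Your observation that no relation-checking is needed because $\Fr$ is defined as a restriction, and that $\breve X^* = X^*\cap\breve X$ is what makes the $\zeta\leftrightarrow\lambda$ bookkeeping coherent, both match how the paper is set up.

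There is, however, a concrete error in the formula you announce you intend to verify. You expect that $\bold{Fr}(\imath(B^{(a)}_{i,\zeta}))$ equals $\imath(B^{(a/l)}_{i,\zeta})$ when $l\mid a$ and vanishes otherwise, ``mirroring the classical Frobenius on $E^{(n)},F^{(n)}$.'' This is correct for the cases $\tau(i)\neq i$ and $\tau(i)=i\neq \wb(i)$, but it is \emph{false} in the split rank-one case AI$_1$ (the $\tau(i)=i=\wb(i)$ case you rightly single out as the hard one). There, writing $n = kl + b$ with $0<b<l$, the image does \emph{not} vanish when $b$ is even and $k$ has the parity opposite to $\langle i,\zeta\rangle$: instead one gets the nonzero scalar multiple $\left[\begin{smallmatrix}(l-1)/2\\ b/2\end{smallmatrix}\right]_{v_i^2} B^{(k)}_{i,\zeta}$ (Propositions~\ref{prop:teven} and the companion odd-parity statement). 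So the $\imath$-Frobenius on AI$_1$ $\imath$-divided powers is genuinely not a verbatim mirror of the $E,F$ Frobenius; there is an intermediate regime where nontrivial quantum binomial coefficients survive. If you set out to verify vanishing for all $l\nmid a$, the computation will refuse to close, and you would need to discover and then prove the corrected formula. The theorem itself is safe — the extra term is still an $\mA'$-multiple of an $\imath$-divided power, hence still lies in ${}_{\mA'}\Udot^{*,\imath}$ — but your proposal as written contains a false subclaim that would have to be repaired before the proof goes through. A second, smaller point: the explicit closed formula actually used in the paper for $B^{(n)}_{i,\zeta}$ in type AI$_1$ (from \cite{BeW18}) is an expansion in $E_i^{(a)}F_i^{(b)} 1_\lambda$, not the factored form in $(B_{i,\zeta}^2 - c_j)$ you describe; the recursion $B_i B_{i,\zeta}^{(n)} = [n+1]_{v_i} B_{i,\zeta}^{(n+1)} + (\text{possibly}) [n]_{v_i} B_{i,\zeta}^{(n-1)}$ is what produces the surviving scalar, and it is precisely the asymmetry of this recursion between the two parities of $n$ that your ``mirroring'' intuition misses.
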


Recall Proposition~\ref{prop:idiv} that ${}_{\mA\rq{}}\Uidot $ is generated by the $\imath$-divided powers $B^{(a)}_{i,\zeta}$ ($i \in \I$) and $E^{(a)}_j 1_\zeta$ ($j \in \Iblack$) for $\zeta \in X_i$ and $a \ge 0$. It suffices to prove that 
\begin{enumerate}
\item\label{FrobEj} $ \Fr (\imath (E^{(a)}_j 1_{\zeta} )) = \Fr ( \sum_{\lambda \in X }E^{(a)}_j  1_\lambda) \in \text{Image of } \imath : {}_{\mA\rq{}}\Udot^{*,\imath} \rightarrow  {}_{\mA\rq{}}  {\Udot}^{*,\wedge}, $

\item\label{FrobBi} $ \Fr (\imath (B^{(a)}_{i,\zeta} )) = \Fr ( \sum_{\lambda \in X }B^{(a)}_{i,\zeta} 1_\lambda) \in \text{Image of } \imath : {}_{\mA\rq{}}\Udot^{*,\imath} \rightarrow  {}_{\mA\rq{}}  {\Udot}^{*,\wedge}.$
\end{enumerate}

The proof reduces to real rank one quantum symmetric pairs. The statement \eqref{FrobEj} is trivial.   We shall prove the statement \eqref{FrobBi} case by case for real rank one quantum symmetric pairs. We include the Satake diagram for real rank one quantum symmetric pairs for the readers' convenience. 

\begin{table}[h]
\caption{Satake diagrams of  
 symmetric pairs of real rank one}
\label{table:localSatake}
\begin{tabular}{| c | c || c | c |}
\hline
\begin{tikzpicture}[baseline=0]
\node at (0, -0.15) {AI$_1$};
\end{tikzpicture} 
& 
$	\begin{tikzpicture}[baseline=0]
		\node  at (0,0) {$\circ$};
		\node  at (0,-.3) {\small 1};
	\end{tikzpicture}
$ 
&
\begin{tikzpicture}[baseline=0]
\node at (0, -0.15) {AII$_3$};
\end{tikzpicture}
&
$	\begin{tikzpicture}[baseline=0]
		\node at (0,0) {$\bullet$};
		\draw (0.1, 0) to (0.4,0);
		\node  at (0.5,0) {$\circ$};
		\draw (0.6, 0) to (0.9,0);
		\node at (1,0) {$\bullet$};
		\node at (0,-.3) {\small 1};
		\node  at (0.5,-.3) {\small 2};
		\node at (1,-.3) {\small 3};
			\end{tikzpicture}	
$

\\
\hline
\begin{tikzpicture}[baseline=0]
\node at (0, -0.2) {AIII$_{11}$};
\end{tikzpicture}
 &
\begin{tikzpicture}[baseline = 6] 
		\node at (-0.5,0) {$\circ$};
		\node at (0.5,0) {$\circ$};
		\draw[<->] (-0.5, 0.2) to[out=45, in=180] (-0.15, 0.35) to (0.15, 0.35) to[out=0, in=135] (0.5, 0.2);
		\node at (-0.5,-0.3) {\small 1};
		\node at (0.5,-0.3) {\small 2};
	\end{tikzpicture}
	&
\begin{tikzpicture}[baseline=0]
\node at (0, -0.2) {AIV, n$\ge$2};
\end{tikzpicture} &
\begin{tikzpicture}	[baseline=6]
		\node at (-0.5,0) {$\circ$};
		\draw[-] (-0.4,0) to (-0.1, 0);
		\node  at (0,0) {$\bullet$};
		\node at (2,0) {$\bullet$};
		\node at (2.5,0) {$\circ$};
		\draw[-] (0.1, 0) to (0.5,0);
		\draw[dashed] (0.5,0) to (1.4,0);
		\draw[-] (1.6,0)  to (1.9,0);
		\draw[-] (2.1,0) to (2.4,0);
		\draw[<->] (-0.5, 0.2) to[out=45, in=180] (0, 0.35) to (2, 0.35) to[out=0, in=135] (2.5, 0.2);
		\node at (-0.5,-.3) {\small 1};
		\node  at (0,-.3) {\small 2};
		\node at (2.5,-.3) {\small n};
	\end{tikzpicture}
\\
\hline
\begin{tikzpicture}[baseline=0]
\node at (0, -0.2) {BII, n$\ge$ 2};
\end{tikzpicture} & 
    	\begin{tikzpicture}[baseline=0, scale=1.5]
		\node at (1.05,0) {$\circ$};
		\node at (1.5,0) {$\bullet$};
		\draw[-] (1.1,0)  to (1.4,0);
		\draw[-] (1.4,0) to (1.9, 0);
		\draw[dashed] (1.9,0) to (2.7,0);
		\draw[-] (2.7,0) to (2.9, 0);
		\node at (3,0) {$\bullet$};
		\draw[-implies, double equal sign distance]  (3.1, 0) to (3.7, 0);
		\node at (3.8,0) {$\bullet$};
		\node at (1,-.2) {\small 1};
		\node at (1.5,-.2) {\small 2};
		\node at (3.8,-.2) {\small n};
	\end{tikzpicture}	
&
\begin{tikzpicture}[baseline=0]
\node at (0, -0.15) {CII, n$\ge$3};
\end{tikzpicture}  
& 
		\begin{tikzpicture}[baseline=6]
		\draw (0.6, 0.15) to (0.9, 0.15);
		\node  at (0.5,0.15) {$\bullet$};
		\node at (1,0.15) {$\circ$};
		\node at (1.5,0.15) {$\bullet$};
		\draw[-] (1.1,0.15)  to (1.4,0.15);
		\draw[-] (1.4,0.15) to (1.9, 0.15);
		\draw (1.9, 0.15) to (2.1, 0.15);
		\draw[dashed] (2.1,0.15) to (2.7,0.15);
		\draw[-] (2.7,0.15) to (2.9, 0.15);
		\node at (3,0.15) {$\bullet$};
		\draw[implies-, double equal sign distance]  (3.1, 0.15) to (3.7, 0.15);
		\node at (3.8,0.15) {$\bullet$};
		\node  at (0.5,-0.15) {\small 1};
		\node at (1,-0.15) {\small 2};
		\node at (3.8,-0.15) {\small n};
	\end{tikzpicture}		
\\
\hline
\begin{tikzpicture}[baseline=0]
\node at (0, -0.05) {DII, n$\ge$4};
\end{tikzpicture}&
	\begin{tikzpicture}[baseline=0]
		\node at (1,0) {$\circ$};
		\node at (1.5,0) {$\bullet$};
		\draw[-] (1.1,0)  to (1.4,0);
		\draw[-] (1.4,0) to (1.9, 0);
		\draw[dashed] (1.9,0) to (2.7,0);
		\draw[-] (2.7,0) to (2.9, 0);
		\node at (3,0) {$\bullet$};
		\node at (3.5, 0.4) {$\bullet$};
		\node at (3.5, -0.4) {$\bullet$};
		\draw (3.1, 0.1) to (3.4, 0.35);
		\draw (3.1, -0.1) to (3.4, -0.35);
		\node at (1,-.3) {\small 1};
		\node at (1.5,-.3) {\small 2};
		\node at (3.5, 0.7) {\small n-1};
		\node at (3.5, -0.6) {\small n};
	\end{tikzpicture}		
&
\begin{tikzpicture}[baseline=0]
\node at (0, -0.2) {FII};
\end{tikzpicture}&
\begin{tikzpicture}[baseline=0][scale=1.5]
	\node at (0,0) {$\bullet$};
	\draw (0.1, 0) to (0.4,0);
	\node at (0.5,0) {$\bullet$};
	\draw[-implies, double equal sign distance]  (0.6, 0) to (1.2,0);
	\node at (1.3,0) {$\bullet$};
	\draw (1.4, 0) to (1.7,0);
	\node at (1.8,0) {$\circ$};
	\node at (0,-.3) {\small 1};
	\node at (0.5,-.3) {\small 2};
	\node at (1.3,-.3) {\small 3};
	\node at (1.8,-.3) {\small 4};
\end{tikzpicture}
\\
\hline
\end{tabular}
\newline
\smallskip
\end{table}

\subsection{}Let $i\in \Iwhite$ be such that $\tau (i)\neq i$. This includes type AIII$_{11}$ and type AIV.
We have $\kappa_i=0$, and $B_i = F_i +\vs_i \T_{w_{\bullet}} (E_{\tau i})  \tK^{-1}_i $. We write  
\[
Y_i : = \vs_i \T_{w_{\bullet}} (E_{\tau i})  \tK^{-1}_i, \qquad Y_i^{(n)} = Y_i^n / [n]^!_i.
\]

We have $F_i Y_i -v_i^{-2} Y_i F_i =[F_i,  Y_i] \tK^{-1}_i =0$.  Following \cite[\S5.5.1]{BW18c}, we define
\begin{equation}
B_{i,\zeta}^{(n)} = \frac{B_i^n}{[n]_i!} \one_{\zeta}
= \sum_{a=0}^n  v_i^{-a(n-a)}  Y_i^{(a)}F_i^{(n-a)} \one_\zeta \in {}_\mA \Uidot. 
\end{equation}

\begin{prop}\label{prop:ineqtaui}
We have that 
\begin{align*}
	\Fr:  {}_{\mA\rq{}}  {\Udot}^\wedge &\longrightarrow  {}_{\mA\rq{}}  {\Udot}^{*,\wedge},\\
 \sum_{\lambda \in X }B^{(n)}_{i,\zeta} 1_\lambda &\mapsto 
\begin{cases}
 \sum_{\lambda \in X^* } B^{(n/l)}_{i, \zeta} 1_\lambda, &\text{if } n \in l\Z, \zeta \in X^*_\imath;\\
0, &\text{otherwise.}
\end{cases} 
\end{align*}
In other words, we have 
\begin{align*}
	\Fr:  {}_{\mA\rq{}}\Uidot  &\longrightarrow  {}_{\mA\rq{}}\Udot^{*,\imath},\\
B^{(n)}_{i,\zeta}  &\mapsto \begin{cases}
 B^{(n/l)}_{i, \zeta}, &\text{if } n \in l\Z, \zeta \in X^*_\imath;\\
0, &\text{otherwise.}
\end{cases} 
\end{align*}

\end{prop}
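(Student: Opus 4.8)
The plan is to reduce the computation to the completed quantum group $ {}_{\mA'}\Udot^{\wedge}$, where the Frobenius morphism $\bold{Fr}$ from Theorem~\ref{thm:LuFr} acts by an explicit formula, and to carry out an explicit computation with the $\imath$-divided power $B_{i,\zeta}^{(n)}$ expanded in terms of the commuting pieces $Y_i$ and $F_i$. First I would record the key structural facts: because $\tau(i) \neq i$, we have $\kappa_i = 0$ and, as noted just before the statement, $F_i Y_i - v_i^{-2} Y_i F_i = 0$, so $Y_i$ and $F_i$ $v_i$-commute and the binomial-type expansion $B_{i,\zeta}^{(n)}\one_\zeta = \sum_{a=0}^n v_i^{-a(n-a)} Y_i^{(a)} F_i^{(n-a)}\one_\zeta$ holds in $ {}_\mA\Uidot$. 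Under $\imath$ this becomes $\sum_{\lambda \in X}\sum_{a=0}^n v_i^{-a(n-a)} Y_i^{(a)} F_i^{(n-a)} \one_\lambda$ inside $ {}_{\mA'}\Udot^{\wedge}$, where the inner sum is only over those $\lambda$ with $\overline\lambda = \zeta$.

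Next I would compute $\bold{Fr}$ on each term $Y_i^{(a)} F_i^{(n-a)}\one_\lambda$. Since $Y_i = \vs_i \T_{w_\bullet}(E_{\tau i})\tK_i^{-1}$ and $\Iblack = \emptyset$ in types AIII$_{11}$ and AIV at the relevant node (more precisely $\T_{w_\bullet}$ is trivial on the relevant piece, as $\tau i, i \in \Iwhite$), the divided power $Y_i^{(a)}$ is, up to the scalar $\vs_i^a$ and a power of $\tK$, essentially $E_{\tau i}^{(a)}$; one then invokes the explicit formula for $\bold{Fr}$ on $E_{\tau i}^{(a)}\one_\mu$ and $F_i^{(n-a)}\one_\mu$ from Theorem~\ref{thm:LuFr}, together with the fact that $\bold{Fr}$ kills $1_\lambda$ for $\lambda \notin X^*$ and that $\bold{Fr}(K_\mu) $ behaves compatibly. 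The upshot is that the term survives only when both $a \in l\Z$ and $n - a \in l\Z$ (hence $n \in l\Z$) and $\lambda \in X^*$, and in that case $\bold{Fr}(Y_i^{(a)} F_i^{(n-a)}\one_\lambda) = \vs_i^{a}\cdot (\text{adjusted scalar})\cdot (Y_i^*)^{(a/l)} (F_i)^{(n/l - a/l)}\one_\lambda$ in $ {}_{\mA'}\Udot^{*,\wedge}$. Here one must check that the parameter $\vs_i^* = \vs_i^{l^2}$ and the normalization $v_i^* = v_i^{l^2}$ make the scalars match: the exponent $v_i^{-a(n-a)}$ in the source corresponds to $(v_i^*)^{-(a/l)(n/l - a/l)} = v_i^{-l^2 (a/l)(n/l-a/l)} = v_i^{-a(n-a)/1}\cdot(\dots)$ — this bookkeeping, reconciling the $v_i$ versus $v_i^*$ scalars and the power of $\vs_i$ versus $\vs_i^*$ through the relation $l = l_i$ (established after Lemma~\ref{lem:sametype}), is the step I expect to be the main obstacle and the part requiring genuine care.

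Summing over the surviving $a = l a'$ with $0 \le a' \le n/l =: m$ and over $\lambda \in X^*$ with $\overline\lambda = \zeta$, and using that $\overline\lambda \in X^*_\imath$ forces $\lambda \in X^*$ (the last Lemma of Section~\ref{sec:3}), the image becomes $\sum_{\lambda \in X^*, \overline\lambda = \zeta}\sum_{a'=0}^m (v_i^*)^{-a'(m-a')} (Y_i^*)^{(a')} (F_i)^{(m - a')}\one_\lambda = \sum_{\lambda \in X^*}(B_{i,\zeta}^*)^{(m)}\one_\lambda$, where $B^*_{i,\zeta}$ is the $\imath$-divided power for the pair $(\U^*, \U^{*,\imath})$ — this uses the analogous binomial expansion for $B^*$, valid since $F_i (Y_i^*) - (v_i^*)^{-2}(Y_i^*) F_i = 0$ holds for the starred datum as well. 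This is precisely $\imath\big((B^*_{i,\zeta})^{(m)}\big)$ under the embedding \eqref{eq:emb*}, with $m = n/l$, when $n \in l\Z$ and $\zeta \in X^*_\imath$; when $n \notin l\Z$, no term survives, and when $\zeta \notin X^*_\imath$ the elements $1_\lambda$ with $\overline\lambda = \zeta$ all have $\lambda \notin X^*$ (again by the last Lemma of Section~\ref{sec:3} in contrapositive form), so $\bold{Fr}$ kills everything. This gives the first displayed formula; the second is then just the restriction along the embeddings $\imath$ in the commuting square preceding Theorem~\ref{thm:main}, which is injective by Proposition~\ref{prop:emd}, so the equality of completed-algebra elements descends to the asserted equality $\Fr(B^{(n)}_{i,\zeta}) = B^{(n/l)}_{i,\zeta}$ or $0$ in $ {}_{\mA'}\Udot^{*,\imath}$.
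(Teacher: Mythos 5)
Your overall strategy matches the paper's proof: expand $B^{(n)}_{i,\zeta}1_\lambda$ via the quantum binomial formula as $\sum_{a=0}^n v_i^{-a(n-a)} Y_i^{(a)} F_i^{(n-a)}\one_\lambda$, apply $\bold{Fr}$ termwise, observe only terms with $l\mid a$ and $l\mid(n-a)$ survive, and check the scalars line up because $v_i^{-a(n-a)}=(v_i^*)^{-(a/l)((n-a)/l)}$ and $\vs_i^a$ agrees with $(\vs_i^*)^{a/l}$ in $\mA'$. That is exactly what the paper does (its proof is terser, with a typo $(v_i^*)^{-a(n-a)}$ where $(v_i^*)^{-a(k-a)}$ is meant).

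However, the justification you give for why $\bold{Fr}$ treats $Y_i^{(a)}\one_\lambda$ like a divided power of a Chevalley generator contains a concrete error: you assert that ``$\Iblack=\emptyset$ in types AIII$_{11}$ and AIV at the relevant node,'' so that $\T_{w_\bullet}$ is trivial and $Y_i^{(a)}$ is essentially $E_{\tau i}^{(a)}$. This is true for AIII$_{11}$ but false for AIV: there $\Iblack\neq\emptyset$ (all the interior nodes are black), $w_\bullet\neq 1$, and $\T_{w_\bullet}(E_{\tau i})$ is a genuine non-simple root vector. Your stated reason therefore does not cover the AIV case, which is squarely within the hypothesis $\tau(i)\neq i$ of the proposition. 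The correct justification is that Lusztig's quantum Frobenius $\bold{Fr}$ is compatible with the braid group operators $\T_w$, so $\bold{Fr}\bigl(\T_{w_\bullet}(E_{\tau i})^{(a)}\one_\lambda\bigr)$ equals $\T_{w_\bullet}(E_{\tau i})^{(a/l)}\one_\lambda$ when $l\mid a$ and $\lambda\in X^*$, and vanishes otherwise; this is what makes $\bold{Fr}(Y_i^{(a)}\one_\lambda)$ behave as required. With that substitution (and keeping your scalar bookkeeping, which is otherwise correct and depends on $l=l_i$, $v_i^*=v_i^{l^2}$, $\vs_i^*=\vs_i^{l^2}$, and $v^l=1$ in $\mA'$), the argument is sound and agrees with the paper's.
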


\begin{proof}It suffices to check that 
\[
\bold{Fr} (B^{(n)}_{i,\zeta} 1_\lambda ) =  B^{(n/l)}_{i, \zeta} 1_\lambda, \text{ for } \zeta \in X_\imath^*, \overline{\lambda} = \zeta.
\]
We check only the cases where $n = kl \in l\mathbb{Z}$. The other cases are entirely similar. 

We can use the quantum binomial formula (\cite[\S 1.3.5]{Lu}) to write, for $\lambda \in X^{*}$ with $\overline{\lambda} =\zeta$,
\begin{align*}
\bold{Fr}(B^{(n)}_{i, \zeta} \one_{\lambda}) &= \bold{Fr}((\sum_{a=0}^{n}  v_i^{-a(n-a)}  Y_i^{(a)}F_i^{(n-a)})\one_\lambda)\\
&\stackrel{\heartsuit}{=} (\sum_{a=0}^{k}  (v_i^*)^{-a(n-a)}  Y_i^{(a)}F_i^{(n-a)})\one_\lambda\\
&= B_{i,\zeta}^{(k)}\one_{\la},
\end{align*}
where $\heartsuit$ follows from Theorem~\ref{thm:LuFr}, and the fact that $\vs_i^{l^2_i} =\vs_i^*$.
\end{proof}
\subsection{}
 Let $i \in \Iwhite$ such that  $\tau (i) = i \neq \wb(i)$. This includes the types: BII, DII, AII$_3$, CII, and FII. We have $\kappa_i=0$, and $B_i = F_i +\vs_i \T_{w_{\bullet}} (E_{\tau i})  \tK^{-1}_i $. 

We write  \[
 Y_i =  \vs_i \T_{w_{\bullet}}(E_i) \widetilde{K}_i^{-1}, \quad b^{(n)}_i = \sum_{a=0}^n v_i^{-a(n-a)} Y_i^{(a)} F_i^{(n-a)}.
\]
Following (\cite[(5.12)]{BW18c}), we define
\begin{equation}\label{eq:Bi}
B^{(n)}_{i, \zeta} \one_{\lambda} =  b_i^{(n)}\one_{\lambda} + \frac{v}{v-v^{-1}} \sum_{k \ge 1} v_i^{\frac{k (k+1)}{2}} \mathfrak{Z}_{i}^{(k)}  b_i^{(n-2k)}\one_{\lambda},
\end{equation}
where 
\[
\mathfrak{Z}_{i}^{(n)} = -v_{i}^{\frac{1}{2}n(n-1)}(\sum_{a=0}^{n-1}v_{i}^{-2n^{2} + 2na -\frac{1}{2}a(a-1)}Y_{i}^{(n-a)}F_{i}^{(n-a)}\mathfrak{Z}_{i}^{(a)} - F_{i}^{(n)}Y_{i}^{(n)}).
\]


\begin{lem}\label{lem:bi}
We have that
\begin{align*}
	\Fr:  {}_{\mA\rq{}}  {\Udot}^\wedge &\longrightarrow  {}_{\mA\rq{}}  {\Udot}^{*,\wedge},\\
 \sum_{\lambda \in X }b^{(n)}_{i} 1_\lambda &\mapsto  
\begin{cases}
 \sum_{\lambda \in X^* } b^{(n/l)}_{i} 1_\lambda, &\text{if } n \in l\Z;\\
0, &\text{otherwise.}
\end{cases} 
\end{align*}
In other words, we have
\begin{align*}
	\Fr:  {}_{\mA\rq{}}\Uidot  &\longrightarrow  {}_{\mA\rq{}}\Udot^{*,\imath},\\
b^{(n)}_{i} 1_\zeta &\mapsto
\begin{cases}
b^{(n/l)}_{i} 1_\zeta, &\text{if } n \in l\Z, \zeta \in X^*_\imath;\\
0, &\text{otherwise.}
\end{cases} 
\end{align*}
\end{lem}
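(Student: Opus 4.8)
The plan is to reduce \lemref{lem:bi} to \thmref{thm:LuFr} by a careful bookkeeping of how $\Fr$ acts on the two building blocks of $b_i^{(n)}\one_\lambda$, namely the divided powers $F_i^{(a)}$ and the elements $Y_i^{(a)} = \vs_i^{(a)} \T_{w_\bullet}(E_i)^{(a)} \widetilde K_i^{-a}$. First I would record that $\T_{w_\bullet}(E_i)$ lies in $\UIblack E_{\tau i} \UIblack$ and, more to the point, that it can be expressed as an $\mA$-linear combination of monomials in the $E_j$ ($j\in\Iblack$) and $E_i$; since $\Fr$ is already known (\thmref{thm:LuFr}) to be an algebra homomorphism on ${}_{\mA'}\Udot^\wedge$ sending $E_j^{(m)}1_\lambda$ and $F_j^{(m)}1_\lambda$ to their rescaled images or to $0$, it suffices to track what $\Fr$ does to the braid-group element $\T_{w_\bullet}$. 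Here I would invoke the compatibility of the quantum Frobenius with Lusztig's symmetries — $\T_{w_\bullet}$ on ${}_{\mA'}\Udot$ intertwines with $\T_{w_\bullet}$ on ${}_{\mA'}\Udot^*$ under $\Fr$ (this is part of the package in \cite[Chapter~41]{Lu}, and morally it is the statement that $\Fr$ is an equivalence of the relevant categories) — so that $\Fr$ sends $Y_i^{(la)}1_\lambda$ to $(Y_i^*)^{(a)}1_\lambda$ for $\lambda\in X^*$ and kills $Y_i^{(a)}1_\lambda$ when $l\nmid a$, using as before the identity $\vs_i^{l^2} = \vs_i^*$ together with $v_i^* = v_i^{l^2}$.

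Granting this, the computation is parallel to \propref{prop:ineqtaui}. Fix $\lambda\in X^*$ with $\overline\lambda = \zeta$. For $n = kl$, apply $\Fr$ termwise to
\[
b_i^{(n)}\one_\lambda = \sum_{a=0}^n v_i^{-a(n-a)} Y_i^{(a)}F_i^{(n-a)}\one_\lambda.
\]
By the above, a term survives only when both $a$ and $n-a$ are divisible by $l$, i.e.\ $a = la'$ with $0\le a'\le k$, and then $v_i^{-a(n-a)}$ maps to $(v_i^*)^{-a'(k-a')}$ because $v_i^{-l^2 a'(k-a')} = (v_i^{l^2})^{-a'(k-a')}$; the surviving sum is exactly $\sum_{a'=0}^k (v_i^*)^{-a'(k-a')}(Y_i^*)^{(a')}(F_i^*)^{(k-a')}\one_\lambda = b_i^{(k)}\one_\lambda$ in ${}_{\mA'}\Udot^{*,\wedge}$. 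When $l\nmid n$, every term has either $l\nmid a$ or $l\nmid(n-a)$ (since if $l$ divided both it would divide $n$), so $\Fr$ annihilates the whole sum; and for $\lambda$ with $\overline\lambda\notin X^*_\imath$ the image $1_\lambda$ is already $0$ under $\Fr$. Summing over $\lambda\in X$ and using the embedding \eqnref{eq:emb*} to pass from the completion back to ${}_{\mA'}\Udot^{*,\imath}$ gives the second displayed formula from the first, exactly as in the proof of \propref{prop:ineqtaui}.

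The main obstacle I anticipate is pinning down the behaviour of $\Fr$ on $\T_{w_\bullet}(E_i)$ rigorously rather than heuristically. The clean statement is that $\Fr\circ\T_{w_\bullet} = \T_{w_\bullet}\circ\Fr$ as maps on the completed modified forms, where the two $\T_{w_\bullet}$'s are the symmetries attached to the root data $(Y,X,\dots)$ and $(Y^*,X^*,\dots)$ respectively; this should follow from the explicit formulas for $\T_{w}$ on divided powers together with the matching of Cartan data granted by \lemref{lem:sametype} (which is precisely why $i\circ i$ and $i\cdot i$ scale compatibly), but one must check the normalizations of $\T_{w_\bullet}$ are the ones compatible with $\Fr$ — Lusztig's $T'_{i,e}$ versus $T''_{i,e}$ — and that the rescaling of $\vs_i$ absorbs any sign or power of $v_i$ discrepancy. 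Once this intertwining is in hand, everything else is the same pattern of "a term in the divided-power expansion survives $\Fr$ iff every exponent is divisible by $l$", and the conclusion drops out.
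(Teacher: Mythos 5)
Your proof takes essentially the same route as the paper, which simply declares the computation ``the same as Proposition~\ref{prop:ineqtaui}'': expand $b_i^{(n)}\one_\lambda = \sum_{a=0}^n v_i^{-a(n-a)} Y_i^{(a)} F_i^{(n-a)}\one_\lambda$, observe that $\Fr$ kills each term unless $l \mid a$ and $l \mid (n-a)$ (which forces $l\mid n$), and rescale $v_i$ to $v_i^*$ and $\vs_i$ to $\vs_i^{l^2}=\vs_i^*$ in the survivors. The point you flag as an anticipated obstacle --- the intertwining $\Fr\circ T_{w_\bullet}=T_{w_\bullet}\circ\Fr$ needed to control $\Fr$ on $Y_i^{(a)}$ once $\I_\bullet\neq\emptyset$ (which is the situation in every case covered by this lemma) --- is genuinely required and is left implicit in the $\heartsuit$ step of the paper's proof of \propref{prop:ineqtaui}, so making it explicit is a sharpening rather than a deviation.
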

\begin{proof}
This is the same computation as Proposition~\ref{prop:ineqtaui}.
\end{proof}

\begin{prop}
We have that
\begin{align*}
	\Fr:  {}_{\mA\rq{}}  {\Udot}^\wedge &\longrightarrow  {}_{\mA\rq{}}  {\Udot}^{*,\wedge},\\
\sum_{\lambda \in X } B^{(n)}_{i, \zeta} 1_\lambda &\mapsto
\begin{cases}
\sum_{\lambda \in X^*} b^{(n/l)}_{i} 1_\lambda, &\text{if } n \in l\Z, \zeta \in X^*_\imath;\\
0, &\text{otherwise.}
\end{cases} 
\end{align*}
In other words, we have
\begin{align*}
	\Fr:  {}_{\mA\rq{}}\Uidot  &\longrightarrow  {}_{\mA\rq{}}\Udot^{*,\imath},\\
B^{(n)}_{i} 1_\zeta &\mapsto
\begin{cases}
b^{(n/l)}_{i} 1_\zeta, &\text{if } n \in l\Z, \zeta \in X^*_\imath;\\
0, &\text{otherwise.}
\end{cases} 
\end{align*}\end{prop}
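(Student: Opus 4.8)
The plan is to apply the algebra homomorphism $\Fr$ to the defining identity \eqref{eq:Bi} for $B^{(n)}_{i,\zeta}\one_\lambda$, working inside the completion ${}_{\mA'}\Udot^{\wedge}$, and to show that $\Fr$ annihilates every correction term attached to a $\mathfrak{Z}_i^{(k)}$, so that only $\Fr(b_i^{(n)}\one_\lambda)$ survives; this term is computed by \lemref{lem:bi}. As in the proof of \propref{prop:ineqtaui}, it suffices to check $\Fr(B^{(n)}_{i,\zeta}\one_\lambda)=b_i^{(n/l)}\one_\lambda$ for $\overline{\lambda}=\zeta\in X^*_\imath$ and $n\in l\Z$ (so that $\lambda\in X^*$); the remaining cases are handled the same way and give $0$, and the ``in other words'' reformulation follows from the embedding \eqref{eq:emb*} exactly as in \propref{prop:ineqtaui}.

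The crux is that $\Fr(\mathfrak{Z}_i^{(k)}\one_\nu)=0$ for every $k\ge1$ and every $\nu$. (Each $\mathfrak{Z}_i^{(k)}\one_\nu$ lies in ${}_{\mA'}\Udot$, since its recursion involves only the integral divided powers $Y_i^{(m)},F_i^{(m)}$ and scalars in $\mA'$.) I would prove this by induction on $k$. By \thmref{thm:LuFr}, $\Fr(F_i^{(m)}\one_\mu)=0$ unless $l\mid m$, and $\Fr(\one_\mu)=0$ unless $\mu\in X^*$; since $Y_i$ and $F_i$ occur with the same exponent $k-a$ in the recursion, the summand $Y_i^{(k-a)}F_i^{(k-a)}\mathfrak{Z}_i^{(a)}\one_\nu$ is killed by $\Fr$ unless $l\mid(k-a)$ and, for $a\ge1$, unless $l\mid a$ by the inductive hypothesis, while $F_i^{(k)}Y_i^{(k)}\one_\nu$ is killed unless $l\mid k$. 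Hence $\Fr(\mathfrak{Z}_i^{(k)}\one_\nu)=0$ if $l\nmid k$ or $\nu\notin X^*$. If $k=k'l$ and $\nu\in X^*$, the inductive hypothesis removes the summands with $a=l,2l,\dots,(k'-1)l$, leaving the $a=0$ term and $F_i^{(k)}Y_i^{(k)}\one_\nu$; the $v_i$-exponents then appearing ($\tfrac{k(k-1)}{2}$ and $-2k^2$, with $k=k'l$) are multiples of $l$ and hence equal $1$ in $\mA'$ because $v_i^{\,l}=1$ there, so --- using $\Fr(Y_i^{(k)}F_i^{(k)}\one_\nu)=Y_i^{(k')}F_i^{(k')}\one_\nu$ from the computation behind \lemref{lem:bi} --- one obtains $\Fr(\mathfrak{Z}_i^{(k'l)}\one_\nu)=[F_i^{(k')},Y_i^{(k')}]\one_\nu$.

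It then remains to show that $F_i$ and $Y_i$ commute on $X^*$-weight vectors in ${}_{\mA'}\U^*$, forcing $[F_i^{(m)},Y_i^{(m)}]\one_\nu=0$ for all $m\ge1$. Since $v_i^*=v_i^{\,l^2}=1$ in $\mA'$, the algebra ${}_{\mA'}\U^*$ is a classical (divided-power) enveloping algebra, $\tK_i$ acts as the identity on $X^*$-weight vectors, and on those $Y_i=\vs_i^*\,\T_{w_\bullet}(E_i)\tK_i^{-1}$ acts as $\vs_i^*$ times the honest Lie algebra root vector $\T_{w_\bullet}(E_i)$; thus $[F_i,Y_i]\one_\nu$ is a Lie algebra element of weight $w_\bullet(\alpha_i)-\alpha_i$, which is nonzero since $\tau(i)=i\ne\wb(i)$. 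A short case-by-case inspection of the five real rank one Satake diagrams BII, DII, AII$_3$, CII, FII shows that $w_\bullet(\alpha_i)-\alpha_i$ is not a root, so this element vanishes. Granting this, put $D:=B^{(n)}_{i,\zeta}\one_\lambda-b_i^{(n)}\one_\lambda\in{}_{\mA'}\Udot$; by \eqref{eq:Bi}, $(v-v^{-1})D=v\sum_{k\ge1}v_i^{k(k+1)/2}\mathfrak{Z}_i^{(k)}b_i^{(n-2k)}\one_\lambda$, and applying $\Fr$ together with $\Fr(\mathfrak{Z}_i^{(k)}\one_\nu)=0$ ($k\ge1$, distributing $\Fr$ over the products) gives $(v-v^{-1})\Fr(D)=0$; as ${}_{\mA'}\Udot^{*,\wedge}$ is torsion-free over the domain $\mA'$ and $v-v^{-1}\ne0$, we conclude $\Fr(D)=0$, i.e. $\Fr(B^{(n)}_{i,\zeta}\one_\lambda)=\Fr(b_i^{(n)}\one_\lambda)=b_i^{(n/l)}\one_\lambda$.

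I expect the main obstacle to be the scalar bookkeeping in the $\mathfrak{Z}_i^{(k)}$-recursion --- confirming that all the $v_i$-exponents that arise for $k=k'l$ are divisible by $l$, so they trivialize in $\mA'$ --- together with keeping straight which idempotent $\one_\nu$ sits between consecutive factors when $\Fr$ is distributed across a product in the completion. A cleaner but less self-contained alternative would be to invoke the bar-invariance characterization of the $\imath$divided powers from \cite{BW18c}: $\Fr$ commutes with the bar involution and sends the leading term $b_i^{(n)}\one_\lambda$ to $b_i^{(n/l)}\one_\lambda$, so $\Fr(B^{(n)}_{i,\zeta}\one_\lambda)$ must be $B^{(n/l)}_{i,\zeta}\one_\lambda$, which coincides with $b_i^{(n/l)}\one_\lambda$ in ${}_{\mA'}\U^*$ by the vanishing of the star-datum corrections established above.
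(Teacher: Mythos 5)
Your proof is correct and follows essentially the same strategy as the paper: apply $\Fr$ to the defining formula \eqref{eq:Bi}, reduce to showing $\Fr(\mathfrak{Z}_i^{(k)}\one_\nu)=0$ for all $k\ge 1$, and invoke the non-rootness of $w_\bullet(i')-i'$ together with \lemref{lem:bi}. The two arguments differ only in how the vanishing of $\Fr(\mathfrak{Z}_i^{(k)})$ is propagated: the paper first handles $1\le n\le l-1$, then computes $\Fr(\mathfrak{Z}_i^{(l)}\one_\lambda)=\vs_i^l[F_i,\T_{w_\bullet}(E_i)]\one_\lambda=0$ directly, and then spreads this to all $k$ via the divided-power product identities $\mathfrak{Z}_i^{(kl)}\mathfrak{Z}_i^{(n)}=\bigl[\begin{smallmatrix}kl+n\\ n\end{smallmatrix}\bigr]_i\mathfrak{Z}_i^{(kl+n)}$ and $(\mathfrak{Z}_i^{(l)})^k$ against $\mathfrak{Z}_i^{(kl)}$; you instead run a single strong induction on $k$ directly through the recursion, letting the two filters (vanishing of $F_i^{(m)},Y_i^{(m)}$ under $\Fr$ for $l\nmid m$, plus the inductive hypothesis) kill all terms, and reduce the $l\mid k$ case to $[F_i^{(k/l)},Y_i^{(k/l)}]\one_\nu$. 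Your version is slightly more self-contained and avoids the implicit invertibility bookkeeping in the paper's product identities; you also make explicit the torsion-freeness argument needed to clear the $\frac{v}{v-v^{-1}}$ factor in \eqref{eq:Bi}, which the paper glosses over. Both are fine.
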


\begin{proof}
We first have  
\[
\bold{Fr}(\mathfrak{Z}_{i}^{(n)} \one_{\lambda}) = 0, \qquad \text{ for } 1 \leq n \leq l-1, \lambda \in X.
\]
 Moreover,
\[
\bold{Fr}(\mathfrak{Z}_{i}^{(l)}  \one_{\lambda}) = -  \bold{Fr}(Y_i^{(l)} F_i^{(l)}  - F_i^{(l)}  Y_i^{(l)})  \one_{\lambda}= \varsigma_{i}^{l}[F_{i}, \T_{w_{\bullet}}(E_i)]  \one_{\lambda} = 0, \text{ for any }\lambda \in X. 
\]

The last equality follows from case by case computation. It suffices to notice that in none of these cases is $w_{\bullet}(i\rq{}) - i\rq{}$ a root.

For $1 \leq n \leq l - 1,$
\[
0 = \bold{Fr}(\mathfrak{Z}_{i}^{(kl)}  \one_{\mu\rq{}})\bold{Fr}(\mathfrak{Z}_{i}^{(n)} \one_{\mu} ) = \left[ \begin{array}{cc|r} kl + n \\ n \end{array} \right]_{i}\bold{Fr}(\mathfrak{Z}_{i}^{(kl+n)} \one_{\mu})
\]
and 
\[
0 = \bold{Fr}(\mathfrak{Z}_{i}^{(l)}  )^{k}  \one_{\mu}= k!\bold{Fr}(\mathfrak{Z}_{i})^{(kl)} \one_{\mu}.
\]
Therefore we have 
\[
\bold{Fr}( \mathfrak{Z}_{i}^{(n)} \one_{\mu}) =  0 \quad \mbox{for $n \geq 1$}.
\]
The proposition follows from \eqref{eq:Bi} and Lemma~\ref{lem:bi}.
\end{proof}

\subsection{}\label{subsec:AI1}
In this final section, we consider the case $\tau i = i = \wb i$. This is of type AI$_1$. 
In this case, we have $B_i = F_i + \vs_i E_i \widetilde{K}_i^{-1} $.  Following the definition of $\imath$divided powers in this case given in \cite{BW18a}, the precise formula for $B_{i,\zeta}^{(n)}$ has been obtained in \cite{BeW18} (recall the parameter $\kappa_i = 0$).

Let $\lambda \in X$ such that $\overline{\lambda} = \zeta$. The parity of $\langle i, \lambda \rangle \in \mathbb{Z}$ depends only on $\zeta$, but not on $\lambda$. We shall simply call it the parity of $\langle i, \zeta \rangle$. The computation divides into two cases depends on the parity of $\langle i, \zeta \rangle$. We shall focus on the case when  $\langle i, \zeta \rangle$ is even. The odd case is entirely similar.
%

\begin{prop}\cite[Proposition~2.8]{BeW18}  Let $\langle i, \zeta \rangle$ be even. Let $m \ge 1$, and $\lambda \in X$ such that $\overline{\lambda} = \zeta$. We have 
\begin{equation}\label{eq:2m}
	\begin{split}
B_{i, \zeta}^{(2m)} 1_\lambda  
= & \sum_{c=0}^{m} \sum_{a=0}^{2m-2c} v_i^{2(a+c)(m-a-\langle i, \lambda \rangle/2) - 2ac - \binom{2c+1}{2}} \\
&  \cdot\left[ \begin{array}{cc|r} m-c-a -\langle i, \lambda \rangle/2\\ c \end{array} \right]_{v_i^{2}}E^{(a)}_i F_i^{(2m-2c-a)} 1_\lambda,
\end{split}
\end{equation}

\begin{equation}\label{eq:2m-1}
	\begin{split}
B_{i, \zeta}^{(2m-1)} 1_\lambda = &\sum_{c=0}^{m-1} \sum_{a=0}^{2m-1-2c} v_i^{2(a+c)(m-a-\langle i, \lambda \rangle/2) - 2ac -a- \binom{2c+1}{2}} \, \cdot \\
\quad &\left[ \begin{array}{cc|r} m-c-a -\langle i, \lambda \rangle/2 -1\\ c \end{array} \right]_{v_i^{2}}E^{(a)}_i F_i^{(2m-1-2c-a)} 1_\lambda.
	\end{split}
\end{equation}

\end{prop}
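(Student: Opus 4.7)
\emph{Proof proposal.} The plan is to verify both formulas \eqref{eq:2m} and \eqref{eq:2m-1} simultaneously by induction on $m$, working in the integral form ${}_\mA \Uidot \subset \Udot$ and using the two-term recursion that characterises the $\imath$divided powers in the split rank one case (type AI$_1$) with $\kappa_i=0$. The base cases $m=0,1$ reduce to $B_{i,\zeta}^{(0)}1_\lambda = 1_\lambda$ and $B_{i,\zeta}^{(1)}1_\lambda = B_i 1_\lambda = F_i 1_\lambda + \varsigma_i v_i^{-\langle i,\lambda\rangle} E_i 1_\lambda$, which agree with the claimed right-hand sides after inspection.

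For the inductive step I would use the recursion $B_i \cdot B_{i,\zeta}^{(n-1)} 1_\lambda = [n]_i B_{i,\zeta}^{(n)} 1_\lambda + c_{n,\lambda} B_{i,\zeta}^{(n-2)} 1_\lambda$, where $c_{n,\lambda} \in \mA$ is an explicit scalar depending on the parity of $n$ and on $\langle i,\lambda\rangle$; this recursion is the defining property of the $\imath$divided powers from \cite{BW18a} and is precisely what makes them integral, in contrast to the non-integral $B_i^n/[n]_i^!$. Granting it, the task becomes checking that the closed forms on the right-hand sides of \eqref{eq:2m} and \eqref{eq:2m-1} satisfy the same recursion, which determines them uniquely.

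To perform the matching, I would left-multiply the inductively known formula for $B_{i,\zeta}^{(n-1)}1_\lambda$ by $B_i = F_i + \varsigma_i E_i \widetilde{K}_i^{-1}$ and use the standard commutation identity $E_i F_i^{(b)} 1_\lambda = F_i^{(b)} E_i 1_\lambda + [\langle i,\lambda\rangle + b - 1]_i F_i^{(b-1)} 1_\lambda$ together with $\widetilde{K}_i^{-1} 1_\lambda = v_i^{-\langle i,\lambda\rangle} 1_\lambda$ to bring everything into the normal form $E_i^{(a)} F_i^{(b)} 1_\lambda$. Collecting the coefficient of each monomial and subtracting the $c_{n,\lambda}$-contribution from $B_{i,\zeta}^{(n-2)} 1_\lambda$, the identity to verify reduces to the $v_i^2$-Pascal rule
\[
\left[ \begin{array}{c} N \\ c \end{array} \right]_{v_i^{2}} = \left[ \begin{array}{c} N-1 \\ c \end{array} \right]_{v_i^{2}} + v_i^{2(N-c)} \left[ \begin{array}{c} N-1 \\ c-1 \end{array} \right]_{v_i^{2}},
\]
applied with $N = m - a - \langle i,\lambda\rangle/2$ or a small shift thereof.

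The main obstacle is combinatorial bookkeeping: the quadratic-in-$(a,c)$ exponents of $v_i$ in \eqref{eq:2m}--\eqref{eq:2m-1} must be reconciled exactly with the linear-in-$a$ shifts produced by the $E_iF_i^{(b)}$-commutation and the weight-dependent factor $v_i^{-\langle i,\lambda\rangle}$ coming from $\widetilde{K}_i^{-1}$. Once the exponents are organised by pulling common powers of $v_i$ out of the summation, matching against the Pascal rule is a short computation; the same pattern, with the parity shift of the weight $\langle i,\lambda\rangle \mapsto \langle i,\lambda\rangle - 1$ replaced accordingly, handles the companion step going from \eqref{eq:2m} back to \eqref{eq:2m-1} and closes the induction.
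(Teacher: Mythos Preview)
The paper does not prove this proposition at all: it is quoted verbatim from \cite[Proposition~2.8]{BeW18} and used as a black box in the computations of \S\ref{subsec:AI1}. So there is no ``paper's own proof'' to compare against.

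That said, your proposed route is the natural one and is essentially the argument of \cite{BeW18}. The two-term recursion you invoke is exactly \cite[(2.5)]{BeW18}, which the present paper itself quotes later in the proof of Proposition~\ref{prop:teven}: for $\langle i,\zeta\rangle$ even it reads
\[
B_i\,B_{i,\zeta}^{(n)} = [n+1]_{v_i}\,B_{i,\zeta}^{(n+1)} \quad (n \text{ odd}),\qquad
B_i\,B_{i,\zeta}^{(n)} = [n+1]_{v_i}\,B_{i,\zeta}^{(n+1)} + [n]_{v_i}\,B_{i,\zeta}^{(n-1)} \quad (n \text{ even}).
\]
In particular your coefficient $c_{n,\lambda}$ does not actually depend on $\lambda$ beyond the fixed parity of $\langle i,\zeta\rangle$: it is $0$ when $n$ is even and $[n-1]_{v_i}$ when $n$ is odd. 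With that correction, your plan---left-multiply the inductive formula by $F_i + \varsigma_i E_i\widetilde{K}_i^{-1}$, reorder via $E_iF_i^{(b)}1_\lambda = F_i^{(b)}E_i1_\lambda + [\,\langle i,\lambda\rangle - b + 1\,]_i F_i^{(b-1)}1_\lambda$ (note the sign in the bracket), and match via the $v_i^2$-Pascal rule---goes through and is precisely how \cite{BeW18} establishes the closed form.
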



\begin{prop}\label{prop:teven}
	 Let  $\langle i, \zeta \rangle$ be even. We have, via restriction, 
	 \begin{align*}
	 	\Fr:   {}_{\mA\rq{}}  {\Udot}^\wedge &\longrightarrow  {}_{\mA\rq{}}  {\Udot}^{*,\wedge},\\
\sum_{\lambda \in X}B_{i, \zeta}^{(n)} 1_\lambda&\mapsto 
	\left\{ \begin{array}{rcl}
							&\sum_{\lambda \in X^*}B_{i, \zeta}^{(n/l)} 1_\lambda, &\mbox{for $n \in l\mathbb{Z}$ and $\zeta \in X_\imath^{*}$}; \vspace{.2cm}\\ 
							&\sum_{\lambda \in X^*} \left[ \begin{array}{cc|r} (l-1)/2 \\ b/2 \end{array} \right]_{v_i^{2}}B_{i, \zeta}^{(k)} 1_\lambda, &\begin{aligned}&\text{for $n = kl + b$, $k$ odd,}\\ &\text{$b$ even, $0 < b < l$, $\zeta \in X_\imath^{*}$}; \end{aligned}\\
							&0,  &\mbox{otherwise} .
					\end{array}\right.
	\end{align*}

	 In other words, we have
	 \begin{align*}
	 	\Fr:  {}_{\mA\rq{}}\Uidot  &\longrightarrow  {}_{\mA\rq{}}\Udot^{*,\imath},\\
B_{i, \zeta}^{(n)} &\mapsto 
	\left\{ \begin{array}{rcl}
							&B_{i, \zeta}^{(n/l)}, &\mbox{for $n \in l\mathbb{Z}$ and $\zeta \in X_\imath^{*}$}; \vspace{.2cm}\\ 
							&\left[ \begin{array}{cc|r} (l-1)/2 \\ b/2 \end{array} \right]_{v_i^{2}}B_{i, \zeta}^{(k)}, &\begin{aligned}&\text{for $n = kl + b$, $k$ odd,}\\ &\text{$b$ even, $0 < b < l$, $\zeta \in X^{*}_\imath$}; \end{aligned}\\
							&0,  &\mbox{otherwise} .
					\end{array}\right.
	\end{align*}
\end{prop}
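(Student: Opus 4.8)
\emph{The plan.} I would apply $\bold{Fr}$ term by term to the explicit expansions \eqref{eq:2m} and \eqref{eq:2m-1} of $B_{i,\zeta}^{(n)}\one_\lambda$, as $\lambda$ ranges over the preimages of $\zeta$ in $X$, and read off the answer. Since $\bold{Fr}$ is a homomorphism of $\mA\rq{}$-algebras it fixes every scalar coefficient occurring in \eqref{eq:2m}--\eqref{eq:2m-1} (these already lie in $\mA\rq{}$, where $v$ is a primitive $l$-th root of $1$), and by Theorem~\ref{thm:LuFr} it sends a monomial $E_i^{(a)}F_i^{(a\rq{})}\one_\lambda$ to $E_i^{(a/l)}F_i^{(a\rq{}/l)}\one_\lambda$ when $\lambda\in X^*$, $l\mid a$ and $l\mid a\rq{}$, and to $0$ otherwise. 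If $\zeta\notin X^*_\imath$, then no preimage of $\zeta$ lies in $X^*$ (the quotient $X^*\to X_\imath$ factors through $X^*_\imath$; see the diagram in \secref{sec:3}), so $\bold{Fr}(\imath(B_{i,\zeta}^{(n)}))=0$, which handles that part of the ``otherwise'' case. From now on fix $\lambda\in X^*$ with $\overline\lambda=\zeta$, and record that, by the parity hypothesis, $p:=\langle i,\lambda\rangle$ is an even multiple of $l$, say $p=2lq$, while $\langle i^*,\lambda\rangle^*=p/l=2q$.

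\emph{Two elementary inputs.} First, the digit (``$q$-Lucas'') identity for Gaussian binomials at a primitive $l$-th root of unity: writing $N=lN_1+N_0$ and $K=lK_1+K_0$ with $0\le N_0,K_0<l$, one has $\left[\begin{smallmatrix}N\\K\end{smallmatrix}\right]_{v_i^2}=\binom{N_1}{K_1}\left[\begin{smallmatrix}N_0\\K_0\end{smallmatrix}\right]_{v_i^2}$ with $\binom{N_1}{K_1}$ an ordinary binomial coefficient; this follows by iterating the quantum binomial formula \cite[\S1.3.5]{Lu}, and the right-hand side vanishes whenever $K_0>N_0$ and equals $\binom{N_1}{K_1}$ whenever $K_0=0$. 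Secondly, whenever a monomial in \eqref{eq:2m} or \eqref{eq:2m-1} survives $\bold{Fr}$, its accompanying power of $v_i$ is trivial: substituting $a=l\alpha$ and reducing the exponent modulo $l$ (using $l\mid p$, $l$ odd, and the congruences for $c$ and $m-c$ found below) one checks the exponent lies in $l\mathbb{Z}$, and $v_i$ has exact order $l$ in $\mA\rq{}$ since $(i\cdot i)/2\in\{1,2,3\}$ is prime to $l$. Finally, in $\mA\rq{}$ we have $v_i^*=v_i^{l^2}=1$, so the expansion of $B_{i,\zeta}^{(k)}\one_\lambda$ inside $\U^*$ provided by \cite[Proposition~2.8]{BeW18} is its ``classical'' specialisation: no $v$-powers occur and every Gaussian binomial there is an ordinary binomial, with $\langle i^*,\lambda\rangle^*$ in place of $\langle i,\lambda\rangle$.

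\emph{The case analysis.} Write $n=kl+b$ with $0\le b<l$. (i) If $b=0$: a monomial of \eqref{eq:2m} (when $n$ is even) or of \eqref{eq:2m-1} (when $n$ is odd) survives $\bold{Fr}$ exactly when $a=l\alpha$ and $l\mid c$; the bottom digit of the numerator of each surviving Gaussian binomial is then $0$ if $n$ is even and $(l-1)/2$ if $n$ is odd, while the bottom digit of its denominator is $0$, so by the digit identity each such coefficient is an ordinary binomial; the $v_i$-powers disappear; and the substitution $c\mapsto c/l$, $a\mapsto a/l$ turns the surviving sum precisely into the $\U^*$-expansion of $B_{i,\zeta}^{(n/l)}\one_\lambda$. (ii) If $k$ is odd and $b$ is even with $0<b<l$ (so $n$ is odd): a monomial of \eqref{eq:2m-1} survives exactly when $a=l\alpha$ and $c\equiv b/2\pmod l$; then every surviving Gaussian binomial has numerator of bottom digit $(l-1)/2$ and denominator of bottom digit $b/2$, so the digit identity extracts the common factor $\left[\begin{smallmatrix}(l-1)/2\\b/2\end{smallmatrix}\right]_{v_i^2}$ (nonzero, since $0\le b/2\le(l-1)/2$) times an ordinary binomial; pulling out this factor, killing the $v_i$-powers, and reindexing identifies what remains with the $\U^*$-expansion of $B_{i,\zeta}^{(k)}\one_\lambda$. (iii) In all remaining subcases with $\zeta\in X^*_\imath$ — namely $b$ odd (any $k$), or $b$ even with $0<b<l$ and $k$ even — an inspection of the congruences shows that each surviving monomial carries a Gaussian binomial whose numerator has bottom digit $0$ while its denominator has positive bottom digit ($b$ even with $k$ even; $b$ odd with $n$ even), or whose numerator has bottom digit $(l-1)/2$ while its denominator has bottom digit $(b+l)/2>(l-1)/2$ ($b$ odd with $n$ odd); by the digit identity every such coefficient is $0$, whence $\bold{Fr}(B_{i,\zeta}^{(n)}\one_\lambda)=0$. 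Summing over $\lambda\in X^*$ yields the first displayed formula, and the ``in other words'' formula follows from the injectivity \eqref{eq:emb*} of $\imath$.

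\emph{The main obstacle.} The only nontrivial part is the bookkeeping in (iii) and the reindexing in (i)--(ii): determining the set of surviving index pairs $(a,c)$, computing modulo $l$ the bottom digits of the numerator and denominator of each residual Gaussian binomial (this is exactly where $l$ odd and $\langle i,\zeta\rangle$ even are used decisively, forcing the numerator digit to equal $0$ or $(l-1)/2$ and the denominator digit to equal $b/2$ or $(b+l)/2$), and matching the surviving sum with the explicit formula for the appropriate $\imath$-divided power over $\U^*$. The mod-$l$ congruences for the $v_i$-exponents and the digit identity for Gaussian binomials are routine once the indices are in place.
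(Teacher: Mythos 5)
Your proposal is correct, and it reaches the hardest part of the statement (the $n=kl+b$ cases with $0<b<l$, which produce the nontrivial factor $\bigl[\begin{smallmatrix}(l-1)/2\\ b/2\end{smallmatrix}\bigr]_{v_i^2}$) by a genuinely different route from the paper's. For the divisible case $n\in l\mathbb{Z}$ both arguments are essentially identical: apply $\Fr$ monomial by monomial to \eqref{eq:2m}--\eqref{eq:2m-1}, observe that surviving pairs $(a,c)$ have $l\mid a$ and $l\mid c$, kill the $v_i$-power, and collapse Gaussian binomials via \cite[Lemma~34.1.2]{Lu}. The difference is what you do for $0<b<l$. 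The paper avoids all the digit bookkeeping here: it invokes the three-term recursion $B_iB_{i,\zeta}^{(kl+c)}=[kl+c+1]_{v_i}B_{i,\zeta}^{(kl+c+1)}+[kl+c]_{v_i}B_{i,\zeta}^{(kl+c-1)}$ from \cite[(2.5)]{BeW18}, uses $[kl]_{v_i}=0$ and $[kl+b]_{v_i}=[b]_{v_i}\ne 0$, and passes to the fraction field of $\mA'$ to divide by $[b]_{v_i}$ and descend from $\Fr(B_{i,\zeta}^{(kl+b)})$ down to $\Fr(B_{i,\zeta}^{(kl)})$, picking up the telescoping product that identifies to the Gaussian binomial. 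You instead push the explicit expansion \eqref{eq:2m-1} through $\Fr$ for \emph{all} $n$, determining that surviving $c$ must satisfy $2c\equiv b\pmod l$, computing the bottom digits of the numerator and denominator of each residual Gaussian binomial (indeed $(l-1)/2$ and $b/2$ when $k$ is odd and $b$ is even, which you correctly identify as the source of the extra factor, and a vanishing configuration otherwise), and re-indexing to match the classical specialisation of the $\U^*$-side formula. Both approaches are sound; the paper's is shorter and sidesteps the mod-$l$ digit accounting, while yours is more uniform (one tool for all $n$), is self-contained given \eqref{eq:2m}--\eqref{eq:2m-1}, and avoids the fraction-field detour. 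The one thing you gloss over, and should be careful to spell out in a full write-up, is the claim that the surviving $v_i$-exponent in \eqref{eq:2m-1} is always $\equiv 0\pmod l$ when $a\equiv 0$ and $c\equiv b/2\pmod l$ — this is a short but nontrivial congruence calculation using $l\mid\langle i,\lambda\rangle/2$ and the specific shape of $\binom{2c+1}{2}$, and it does come out right.
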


\begin{proof}
Let $\lambda \in X^*$ such that $\overline{\lambda} = \zeta$. We divide the computation into several cases.

(1) We first consider the case when $n =2m$ is even, where either  $ 0<n=2m<l$ or $n=2m=kl$. Recall \eqref{eq:2m} for the expression of $B_{i, \zeta}^{(n)} 1_\lambda$.

	(1.1) We assume $ 0<n=2m<l$. Then it follows from direct computation and  \cite[Lemma~34.1.2]{Lu} that 
	\[
	\Fr (B_{i, \zeta}^{(n)} 1_\lambda) = 0.
	\]

	(1.2) We assume $n=2m=kl$. Note that 
\[
\Fr(E_i^{(a)} F_i^{(2m-2c-a)}) = 
\begin{cases}
	E_i^{(a/l)} F_i^{((2m-2c-a)/l)}, &\text{if } l \vert a \text{ and } l \vert c;\\
	0, &\text{otherwise}.
\end{cases}
\]
However, if $l \vert a$ and $l \vert c$, we must have $v_i^{2(a+c)(m-a-\langle i, \lambda \rangle/2) - 2ac - \binom{2c+1}{2}} =1$.

  Then, we have 
\begin{align*}
	&\bold{Fr}(B_{i, \zeta}^{(2kl)} 1_\lambda ) \\
	= &\sum_{c=0}^{k} \sum_{a=0}^{2k-2c} \left[ \begin{array}{cc|r} kl-cl-al - \langle i, \lambda \rangle/2\\ cl \end{array} \right]_{v_i^{2}}E^{(a)}_i F_i^{(2k-2c-a)} 1_\lambda\\
	= &\sum_{c=0}^{k} \sum_{a=0}^{2k-2c} \left( \begin{array}{cc|r} k-c-a - \langle i, \lambda \rangle^*/2\\ c \end{array} \right) E^{(a)}_i F_i^{(2k-2c-a)} 1_\lambda\\
	= & B_{i, \zeta}^{(2k)} 1_\lambda.
\end{align*}

(2) We then consider the case when $n =2m-1$ is odd, where either $0<n=2m-1<l$ or $n=2m-1 = (2k-1)l$. Recall \eqref{eq:2m-1} for the expression of $B_{i, \zeta}^{(n)} 1_\lambda$ in this case.

	(2.1) We assume $n = 2m-1 < l$. Then by direct computation, we must have 
	\[
	\Fr(B_{i, \zeta}^{(n)} 1_\lambda ) = 0 .
	\]

 (2.2) We assume $n =2m-1= (2k-1)l$ for $2k-1 >0$. Then we have 
 \[
 	m-1 = (k-1)l + (l-1)/2, \quad\text{with } 0< (l-1)/2 < l.
 \]
 Then 
 \begin{align*}
 B_{i, \zeta}^{(n)} 1_\lambda = &\sum_{c=0}^{(k-1)l + ({l-1})/{2}} \sum_{a=0}^{(2k-1) l-2c} v_i^{2(a+c)(m-a-\langle i, \lambda \rangle/2) - 2ac -a- \binom{2c+1}{2}} \, \cdot \\
\quad &\left[ \begin{array}{cc|r} m-c-a -\langle i, \lambda \rangle/2 -1\\ c \end{array} \right]_{v_i^{2}}E^{(a)}_i F_i^{(2m-1-2c-a)} 1_\lambda.
 \end{align*}
Note that 
\[
\Fr(E_i^{(a)} F_i^{(2m-1-2c-a)}) = 
\begin{cases}
	E_i^{(a/l)} F_i^{((2m-1-2c-a)/l)}, &\text{if } l \vert a \text{ and } l \vert c;\\
	0, &\text{otherwise}.
\end{cases}
\]
However, if $l \vert a$ and $l \vert c$, we must have $v_i^{2(a+c)(m-a-\langle i, \lambda \rangle /2 ) - 2ac -a- \binom{2c+1}{2}} =1$. 

Therefore
\begin{align*}
	&\bold{Fr}(B_{i, \zeta}^{(n)} 1_\lambda ) \\
	= &\sum_{c=0}^{(k-1)} \sum_{a=0}^{2k-1-2c} \left[ \begin{array}{cc|r}(k-1)l + ({l-1})/{2}-cl-al - \langle i, \lambda \rangle/2\\ cl \end{array} \right]_{v_i^{2}}E^{(a)}_i F_i^{(2k-1-2c-a)} 1_\lambda\\
	\stackrel{\spadesuit}{=} &\sum_{c=0}^{(k-1)} \sum_{a=0}^{2k {-1}-2c} \left( \begin{array}{cc|r} k-1-c-a - \langle i, \lambda \rangle^*/2\\ c \end{array} \right) E^{(a)}_i F_i^{(2k-1-2c-a)} 1_\lambda\\
{=} & {B_{i, \zeta}^{(2k-1)} 1_\lambda},
\end{align*}
where $\spadesuit$ follows from \cite[Lemma~34.1.2]{Lu}.

(3) At last we consider the case where $n = kl +b$ with $0<b<l$. 
Note that since $\mA\rq{}$ is an integral domain, it suffices to perform the computation in the field of fractions. 

Recall the following induction formula from \cite[(2.5)]{BeW18}: 
\begin{align*}
	B_i B_{i, \zeta}^{(kl+c)} &= [kl+c+1]_{v_i} B_{i, \zeta}^{(kl+c+1)}, \quad &\text{if } kl+c \text{ is odd; }\\
	B_i B_{i, \zeta}^{(kl+c)} &= [kl+c+1]_{v_i} B_{i, \zeta}^{(kl+c+1)} +  [kl+c]_{v_i} B_{i, \zeta}^{(kl+c-1)}, \quad &\text{if } kl+c \text{ is even. }
\end{align*}
Recall $v_i^{l} =1$. Note that $[kl]_{v_i}=0$ and $[kl+b]_{v_i}  = [b]_{v_i}\neq 0$ for all $0<b<l$.

 (3.1) When $k$ (hence also $kl$) is odd, we have 
 \[ 
 \Fr (B_{i, \zeta}^{(kl+b)} 1_\lambda) =  \begin{cases} (-1) \frac{[b-1]_{v_i}}{[b]_{v_i}} \Fr (B_{i, \zeta}^{(kl+b-2)} 1_\lambda), &\text{if } b \text{ is even;}\\
  0, &\text{if } b \text{ is odd.}
 \end{cases} 
\]
 Therefore we have 

\begin{align*}
 \Fr (B_{i, \zeta}^{(kl+b)} 1_\lambda) &= (-1)^{b/2 } \frac{[b-1]_{v_i}}{[b]_{v_i}} \frac{[b-3]_{v_i}}{[b-2]_{v_i}} \cdots \frac{[1]_{v_i}}{[2]_{v_i}} \Fr (B_{i, \zeta}^{(kl)} 1_\lambda) \\
 & = \frac{[l-b+1]_{v_i}}{[b]_{v_i}} \frac{[l-b+3]_{v_i}}{[b-2]_{v_i}} \cdots \frac{[l-1]_{v_i}}{[2]_{v_i}} \Fr (B_{i, \zeta}^{(kl)} 1_\lambda) \\
 & = \left[ \begin{array}{cc|r} (l-1)/2 \\ b/2 \end{array} \right]_{v_i^{2}}  \Fr (B_{i, \zeta}^{(kl)} 1_\lambda).
\end{align*}



(3.2)  When $k$ (hence also $kl$) is even, note that 
\[
B_i B_{i, \zeta}^{(kl)} 
=[kl+1]_{v_i} B_{i, \zeta}^{(kl+1)}, \qquad \text{ since } [kl]_{v_i}=0.
\]

Therefore by a similar computation as above, we have 
\[
 \Fr (B_{i, \zeta}^{(kl+b)} 1_\lambda) =  0, \quad \text{ for all } 0<b<l.
\]
%
\end{proof}

\begin{prop}

Let $\langle i,\zeta \rangle$ be odd. We have 
 \begin{align*}
	 	\Fr:   {}_{\mA\rq{}}  {\Udot}^\wedge &\longrightarrow  {}_{\mA\rq{}}  {\Udot}^{*,\wedge},\\
\sum_{\lambda \in X}B_{i, \zeta}^{(n)} 1_\lambda&\mapsto 
	\left\{ \begin{array}{rcl}
							&\sum_{\lambda \in X^*}B_{i, \zeta}^{(n/l)} 1_\lambda, &\mbox{for $n \in l\mathbb{Z}$ and $\zeta \in X_\imath^{*}$}; \vspace{.2cm}\\ 
							&\sum_{\lambda \in X^*} \left[ \begin{array}{cc|r} (l-1)/2 \\ b/2 \end{array} \right]_{v_i^{2}}B_{i, \zeta}^{(k)} 1_\lambda, &\begin{aligned}&\text{for $n = kl + b$, $k$ even,}\\ &\text{$b$ even, $0 < b < l$, $\zeta \in X^{*}$}; \end{aligned}\\
							&0,  &\mbox{otherwise} .
					\end{array}\right.
	\end{align*}

In other words, we have 
 \begin{align*}
 	\Fr:  {}_{\mA\rq{}}\Uidot  &\longrightarrow  {}_{\mA\rq{}}\Udot^{*,\imath},\\
 B_{i, \zeta}^{(n)} &\mapsto \left\{  \begin{array}{rcl}
							&B_{i, \zeta}^{(n/l)}, &\mbox{for $n \in l\mathbb{Z}$ and $\la \in X_\imath^{*}$}; \vspace{.2cm} \\ 
							&\left[ \begin{array}{cc|r} (l-1)/2 \\ b/2 \end{array} \right]_{v_i^{2}}B_{i, \zeta}^{(k)},  &\!\begin{aligned}&\text{for $n = kl + b$, $k$ even,}\\ &\text{$b$ even, $0 < b < l$, $\zeta \in X_\imath^{*}$;}\end{aligned} \\
							&0,  &\mbox{otherwise}. 
					\end{array}\right.
 \end{align*}

\end{prop}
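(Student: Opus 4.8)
The plan is to mirror the proof of \propref{prop:teven} with the roles of even and odd $k$ interchanged, since the only difference between the even and odd parity cases of $\langle i, \zeta\rangle$ is a shift in the parity of $m$ relative to $k$. Concretely, when $\langle i, \zeta\rangle$ is odd one has the analogous explicit formulas from \cite[Proposition~2.8]{BeW18} for $B_{i,\zeta}^{(2m)}1_\lambda$ and $B_{i,\zeta}^{(2m-1)}1_\lambda$, but now $\langle i, \lambda\rangle$ is odd, so the binomial coefficients involve $(\langle i,\lambda\rangle \mp 1)/2$; the combinatorics of applying $\bold{Fr}$ is otherwise identical. First I would record these formulas and note that $X^*_\imath = X^*$-condition ensures $\lambda \in X^*$ whenever $\overline\lambda = \zeta \in X^*_\imath$, so that $\bold{Fr}(1_\lambda) = 1_\lambda$ is nonzero.

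Next I would split into the same cases as before. For $0 < n < l$ (both parities of $n$), direct inspection of the $v_i$-powers together with \cite[Lemma~34.1.2]{Lu} shows every surviving term after $\bold{Fr}$ vanishes, giving $\Fr(B_{i,\zeta}^{(n)}1_\lambda) = 0$. For $n = kl$, I would use that $\bold{Fr}(E_i^{(a)}F_i^{(2m-2c-a)}1_\lambda)$ (resp.\ with the odd exponent) is nonzero only when $l \mid a$ and $l \mid c$, and that in that case the exponent of $v_i$ reduces to $0$ and the $v_i^2$-binomial coefficient collapses via \cite[Lemma~34.1.2]{Lu} to an ordinary binomial in the $\circ$-datum; this reassembles into $B_{i,\zeta}^{(k)}1_\lambda$. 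The bookkeeping here is the same as in (1.2) and (2.2) of \propref{prop:teven}; one just has to track that $k$ even corresponds to $n = kl$ even (so use \eqnref{eq:2m}) and $k$ odd to $n = kl$ odd (so use \eqnref{eq:2m-1}).

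For the remaining case $n = kl + b$ with $0 < b < l$, I would again pass to the field of fractions of $\mA'$ and use the recursion \cite[(2.5)]{BeW18}, $B_i B_{i,\zeta}^{(kl+c)} = [kl+c+1]_{v_i} B_{i,\zeta}^{(kl+c+1)}$ when $kl+c$ is odd and $B_i B_{i,\zeta}^{(kl+c)} = [kl+c+1]_{v_i} B_{i,\zeta}^{(kl+c+1)} + [kl+c]_{v_i} B_{i,\zeta}^{(kl+c-1)}$ when $kl+c$ is even, together with $v_i^l = 1$, $[kl]_{v_i} = 0$, and $[kl+b]_{v_i} = [b]_{v_i} \neq 0$. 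The key observation is that the parity of $kl+c$ equals the parity of $k + c$ (as $l$ is odd), so the bifurcation in the recursion now happens exactly when $k$ is even — the opposite of the even-$\langle i,\zeta\rangle$ situation. Thus for $k$ even and $b$ even one telescopes
\[
\Fr(B_{i,\zeta}^{(kl+b)}1_\lambda) = (-1)^{b/2}\frac{[b-1]_{v_i}}{[b]_{v_i}}\frac{[b-3]_{v_i}}{[b-2]_{v_i}}\cdots\frac{[1]_{v_i}}{[2]_{v_i}}\,\Fr(B_{i,\zeta}^{(kl)}1_\lambda) = \left[\begin{array}{cc|r}(l-1)/2 \\ b/2\end{array}\right]_{v_i^2}\Fr(B_{i,\zeta}^{(kl)}1_\lambda),
\]
using $[l-j]_{v_i} = -[j]_{v_i}$, while for $k$ even and $b$ odd, or $k$ odd and any $b$, one gets $0$ because the recursion forces a factor $[kl]_{v_i} = 0$ or a chain of odd-index steps that never produces a nonzero multiple of a $B_{i,\zeta}^{(kl)}$ term. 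Combining the cases yields the stated formula, and the ``in other words'' reformulation follows by applying the embedding $\imath$ of \eqnref{eq:emb} and \eqnref{eq:emb*} exactly as in the proof of \propref{prop:ineqtaui}. The main obstacle, as in \propref{prop:teven}, is purely bookkeeping: keeping the parity shift $m \leftrightarrow k$ consistent and checking that the $v_i$-power really does trivialize on the surviving terms; there is no new conceptual input.
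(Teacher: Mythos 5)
Your overall strategy is exactly the paper's: Proposition~\ref{prop:teven} is repeated with the parities of $m$ (and hence $k$) shifted, and the paper itself simply asserts ``entirely similar'' without details. Your case structure, the treatment of $0<n<l$, the case $n=kl$ via \cite[Lemma~34.1.2]{Lu}, and the telescoping in case (3) all mirror the even case correctly, and the final answer matches the statement.

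There is, however, a confused step that you should fix. You cite the recursion from \cite[(2.5)]{BeW18},
\[
B_i B_{i,\zeta}^{(n)} = [n+1]_{v_i}B_{i,\zeta}^{(n+1)}\ \text{if $n$ odd},\qquad
B_i B_{i,\zeta}^{(n)} = [n+1]_{v_i}B_{i,\zeta}^{(n+1)}+[n]_{v_i}B_{i,\zeta}^{(n-1)}\ \text{if $n$ even},
\]
but this is the recursion valid when $\langle i,\zeta\rangle$ is \emph{even}. If you literally use it here, then for $k$ even one runs through the same computation as case (3.2) of Proposition~\ref{prop:teven} and gets $\Fr(B_{i,\zeta}^{(kl+b)}1_\lambda)=0$ for all $0<b<l$ — contradicting the stated formula. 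The role swap between even and odd $k$ does not come from the tautology ``parity of $kl+c$ equals parity of $k+c$'' (that is true in both settings and distinguishes nothing); it comes from the fact that the $\imath$-divided powers, and hence the recursion from \cite{BeW18}, are defined differently according to the parity of $\langle i,\zeta\rangle$. For odd $\langle i,\zeta\rangle$ the one-term and two-term branches are interchanged (the single-term recursion occurs when $n$ is even), and this is what makes the $k$-even case the nontrivial one. Once the correct recursion is substituted, the telescoping identity you wrote, and the final binomial coefficient $\left[\begin{smallmatrix}(l-1)/2\\ b/2\end{smallmatrix}\right]_{v_i^2}$, go through exactly as you describe. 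So the conclusion is right, but the reasoning that gets you there needs the corrected citation and the correct source of the parity swap.
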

\begin{proof}
The computation is entirely simialr to that of Proposition~\ref{prop:teven}. We shall omit the details here. 
\end{proof}
\section{Small quantum symmetric pairs}\label{sec:5}
\subsection{} 
Let ${}_{\mA'}\udot$ be the $\mA'$-subalgebra of $_{\mA'} \Udot$ generated by $E^{(n)}_i 1_\lambda$, $F^{(n)}_i 1_\lambda$ for various $i \in \I$, various $n$ such that $0 \le n <l$ and various $\lambda \in X$. Let ${}_{\mA'}\dot{\mathfrak{p}} = {}_{\mA'}\dot{\mathfrak{p}}_{\I_\bullet}$ be the $\mA'$-subalgebra of $_{\mA'} \Udot$ generated by $E^{(n)}_i 1_\lambda$, $F^{(n)}_j 1_\lambda$ for various $i \in \I_\bullet, j \in \I$, various $n$ such that $0 \le n <l$ and various $\lambda \in X$.  

\begin{definition}
Let ${}_{\mA'}\dot{\mathfrak{u}}^\imath$ be the $\mA'$-subalgebra of ${}_{\mA'}\Uidot$ generated by 
$
 B^{(n)}_{i,\zeta}$, $E_{j}^{(n)} 1_\zeta$ for various $i \in \I$, $j \in \Iblack$,  $\zeta \in X_\imath$ and $n$ such that $0 \le n <l$. For any $\mA'$-commutative ring $R$, we define 
${}_{R}\dot{\mathfrak{u}}^\imath = R \otimes_{\mA'} {}_{\mA'}\dot{\mathfrak{u}}^\imath$.

\end{definition}

 We call $({}_{\mA'}\dot{\mathfrak{u}}^\imath, {}_{\mA'}\udot)$ the small quantum symmetric pair. It is clear that ${}_{\mA'}\dot{\mathfrak{u}}^\imath$ is a ``coideal subalgebra" of  ${}_{\mA'}\udot$, that is, we have (via restriction)
 \[
 	\Delta: {}_{\mA'}\dot{\mathfrak{u}}^\imath \longrightarrow \prod_{\zeta \in X_\imath^*, \lambda \in X^*} {}_{\mA'}\dot{\mathfrak{u}}^\imath 1_\zeta \otimes_{\mA'} {}_{\mA'}\udot 1_\lambda.
 \]

\begin{rem}
Here we abuse the terminology ``coideal subalgebra", even though ${}_{\mA'}\dot{\mathfrak{u}}^\imath$ is not a subalgebra of ${}_{\mA'}\udot$. 
\end{rem}

\begin{thm}\label{thm:main2}
For any $\zeta \in X_\imath$, ${}_{\mA'}\dot{\mathfrak{u}}^\imath 1_\zeta$ is a free $\mA'$-module with rank $l^{\vert \Phi^{+}_\bullet \vert+ \vert \Phi^+ \vert}$.
\end{thm}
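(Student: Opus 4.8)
The plan is to establish the rank formula by a triangular-decomposition argument, reducing the computation of $\dim_{\mA'} \bigl({}_{\mA'}\dot{\mathfrak{u}}^\imath 1_\zeta\bigr)$ to the known rank of the small quantum group $\dot{\mathfrak u}$ of the ambient $\U$, together with a count of the new "$\imath$-direction" generators. The key organizing principle is that $({}_{\mA'}\dot{\mathfrak u}^\imath, {}_{\mA'}\dot{\mathfrak u})$ mirrors, at the level of these finite-rank algebras, the relationship between $\Uidot$ and $\Udot$: namely, $\Uidot$ has a (quasi-)triangular decomposition in which the "positive part" attached to $\Iblack$ matches that of $\U$, while the coideal generators $B_{i,\zeta}$ for $i \in \Iwhite$ play the role of a deformed negative part. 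So I would first record the relevant PBW-type / multiplication basis for ${}_\mA\Uidot 1_\zeta$ coming from \cite{BW18c} — ordered monomials in the $\imath$-divided powers $B^{(a)}_{i,\zeta}$ ($i \in \I$) and the $E^{(a)}_j$ ($j \in \Iblack$), indexed by functions on $\Phi^+$ and $\Phi^+_\bullet$ respectively — and verify this basis is compatible with base change to $\mA'$ via Proposition~\ref{prop:idiv}(1).

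Next I would cut down this basis to the "small" range. Concretely: show that ${}_{\mA'}\dot{\mathfrak u}^\imath 1_\zeta$ is spanned by those ordered monomials all of whose exponents lie in $\{0,1,\dots,l-1\}$. The containment $\supseteq$ is immediate from the definition; for $\supseteq$ in the other direction one must show that higher divided powers $B^{(n)}_{i,\zeta}$ and $E^{(n)}_j 1_\zeta$ with $n \ge l$ already lie in the subalgebra generated by the small ones. For the $E_j$ ($j \in \Iblack$) this is exactly Lusztig's statement about $\dot{\mathfrak u}$ for the Levi \cite[\S35.1.9]{Lu}. For the $B^{(n)}_{i,\zeta}$ with $i \in \Iwhite$ this is the crucial input and is precisely what the rank-one computations of Section~\ref{sec:4} provide: the propositions there (Proposition~\ref{prop:ineqtaui} and its type-by-type companions, Proposition~\ref{prop:teven}, etc.) express $B^{(kl+b)}_{i,\zeta}$ in terms of $B^{(k)}_{i,\zeta}$-type elements times scalars/quantum binomials — combined with the induction formula $B_i B^{(kl+c)}_{i,\zeta} = [kl+c+1]_{v_i} B^{(kl+c+1)}_{i,\zeta} + \cdots$ from \cite[(2.5)]{BeW18}, these let one rewrite any $B^{(n)}_{i,\zeta}$ ($n \ge l$) as an $\mA'$-combination of products $B^{(n_1)}_{i,*} \cdots$ with $0 \le n_s < l$. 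This gives the spanning bound $\mathrm{rank} \le l^{|\Phi^+_\bullet| + |\Phi^+|}$, after checking $|\Phi^+| = |\Phi^+_\bullet| + |\Phi^+ \setminus \Phi^+_\bullet|$ and that the $\imath$-generators are indexed by $\Phi^+$ while the extra $E_j$'s are indexed by $\Phi^+_\bullet$.

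For the lower bound (linear independence of the $l^{|\Phi^+_\bullet| + |\Phi^+|}$ small monomials in ${}_{\mA'}\dot{\mathfrak u}^\imath 1_\zeta$) I would use the embedding $\imath : {}_{\mA'}\Uidot \hookrightarrow {}_{\mA'}\Udot^\wedge$ of Proposition~\ref{prop:emd} together with the leading-term analysis in its proof: $B^{(a)}_{i,\zeta} 1_\lambda = B^{(a)}_i$-monomial$\, 1_\lambda + (\text{lower terms})$ with leading coefficient $1$. Ordering monomials by total degree and applying the triangularity, linear independence of the small $\imath$-monomials reduces to linear independence of the corresponding small monomials in Lusztig's canonical basis of ${}_{\mA'}\Udot$, i.e. to the known fact that $\dot{\mathfrak u} 1_\lambda$ is free of rank $l^{|\Phi^+|+|\Phi^-|} = l^{2|\Phi^+|}$ — wait, here one only wants the $E_j$ ($j\in\Iblack$) together with the full set of $B_i$, so the relevant sub-count is $E$'s on $\Iblack$ (giving $l^{|\Phi^+_\bullet|}$) times $B$'s on all of $\I$ (whose leading terms are the $F$-monomials, giving $l^{|\Phi^+|}$); these live in complementary weight components so no cancellation occurs. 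The main obstacle I anticipate is precisely the $\supseteq$ spanning step for the $B^{(n)}_{i,\zeta}$ with $n\ge l$ in the \emph{even} $\langle i,\zeta\rangle$, $k$-even case of type AI$_1$ (and its odd analogue), where Proposition~\ref{prop:teven} shows $\Fr$ kills $B^{(kl+b)}_{i,\zeta}$ — one must be careful that "killed by Frobenius" does not obstruct the rewriting, and that the relevant quantum-binomial coefficients $\left[\begin{smallmatrix}(l-1)/2\\ b/2\end{smallmatrix}\right]_{v_i^2}$ are units or at least that the recursion still closes over $\mA'$; handling all six rank-one Satake types uniformly here is the technical heart of the argument.
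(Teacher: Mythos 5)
The paper's proof is entirely different from yours and much shorter: it invokes the linear isomorphism $p_{\imath,\lambda}\colon {}_{\mA'}\Uidot 1_\zeta \xrightarrow{\cong} {}_{\mA'}\dot{\bold{P}}1_\lambda$ from \cite[Corollary~6.20]{BW18b}, observes that by the explicit formulas for $B^{(n)}_{i,\zeta}$ this restricts to a surjection (hence bijection) ${}_{\mA'}\dot{\mathfrak{u}}^\imath 1_\zeta \to {}_{\mA'}\dot{\mathfrak{p}}1_\lambda$ onto the small parabolic, and then reads off the rank from Lusztig's formula \cite[Theorem~8.3]{Lu90b} for ${}_{\mA'}\dot{\mathfrak{p}}1_\lambda$. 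Your strategy of building a PBW-type basis directly, establishing spanning by small ordered monomials, and proving linear independence via leading terms is a genuinely different route; in fact, your lower-bound discussion (leading terms of $B$-monomials being $F$-monomials, plus the $E_j$'s on $\Iblack$) is exactly the shadow of the parabolic isomorphism $p_{\imath,\lambda}$ that the paper uses, just not packaged as a single clean map.

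However, your spanning step contains a genuine error. You assert that one must show $B^{(n)}_{i,\zeta}$ and $E^{(n)}_j 1_\zeta$ with $n \ge l$ ``already lie in the subalgebra generated by the small ones,'' and for the $E_j$ you claim this is ``exactly Lusztig's statement'' from \cite[\S35.1.9]{Lu}. This is false and is not what Lusztig proves: $E_j^{(l)}$ does \emph{not} lie in $\dot{\mathfrak{u}}$ in general (indeed $E_j^{(l-1)}E_j^{(1)} = [l]_j E_j^{(l)} = 0$ in ${}_{\mA'}\Udot$, and $\dot{\mathfrak{u}}1_\lambda$ is finite over $\mA'$ while the full integral form is not, so higher divided powers cannot all lie inside). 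The correct formulation of the spanning bound is a straightening argument: products of small generators, when rewritten in PBW order, produce only small monomials because the coefficients multiplying any $E^{(n)}$ or $B^{(n)}$ with $n\ge l$ are quantum binomials that vanish in $\mA'$. This is what Lusztig actually establishes for $\dot{\mathfrak{u}}$, and what you would need to establish for the $B^{(n)}_{i,\zeta}$ — which is considerably more delicate given the non-homogeneous relations (the $\mathfrak{Z}_i^{(k)}$-corrections, the AI$_1$ parity cases, etc.). As written, your reduction would not close. The paper's proof avoids all of this by transferring the problem to the small parabolic $\dot{\mathfrak{p}}$, where Lusztig's straightening has already been done.
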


\begin{proof}
We have the following linear isomorphism via a base change from \cite[Corollary~6.20]{BW18b} 
\[ 
p_{\imath,\lambda}: \xymatrix{{}_{\mA'} \Uidot 1_\zeta \ar[r]^{\cong}& {}_{\mA'} \dot{\bold{P}}1_\lambda}, \qquad \overline{\lambda} =\zeta, \lambda \in X.
\]
Via restriction, we have the map
\[
p_{\imath,\lambda}: {}_{\mA'} \dot{\mathfrak{u}}^\imath 1_\zeta \longrightarrow {}_{\mA'}\dot{\mathfrak{p}}1_\lambda,
\]
whose surjectivity can be obtained similar to \cite[Corollary~6.20]{BW18b}. We know the image lies in  ${}_{\mA'}\dot{\mathfrak{p}}1_\lambda$ thanks to the precise formulas of $
 B^{(n)}_{i,\zeta}$ obtained in \cite{BW18b, BeW18}.

The theorem follows immediately from the fact that ${}_{\mA'}\dot{\mathfrak{p}}1_\lambda$ is a free $\mA'$-module with rank $l^{\vert \Phi^{+}_\bullet \vert+ \vert \Phi^+ \vert}$ thanks to \cite[Theorem~8.3]{Lu90b}.
\end{proof}

%
%
%
%

\bibliography{Ref}
\bibliographystyle{amsalpha}

\end{document}